\documentclass[12pt,a4paper]{amsart}

%Packages
\usepackage[utf8]{inputenc}
\usepackage[T1]{fontenc}
\usepackage[english]{babel}
\usepackage[hidelinks]{hyperref}
\usepackage{amsfonts}
\usepackage{amsthm}
\usepackage{mathtools}
\usepackage{amssymb}
\usepackage{cancel}
\usepackage[margin=3cm]{geometry}
\usepackage{emptypage}
\usepackage{graphicx}
\usepackage{multirow}
%\usepackage{pgfplots}
%\usepackage{tikz}
%\usetikzlibrary{cd}
%\usetikzlibrary{matrix,arrows,decorations.pathmorphing,calc,intersections,through,backgrounds}
%\pgfplotsset{compat=newest}

%Theorems
\theoremstyle{plain}
\newtheorem{Th}{Theorem}[section]
\newtheorem{Th*}{Theorem}
\newtheorem{Lemma}[Th]{Lemma}
\newtheorem{Cor}[Th]{Corollary}
\newtheorem{Prop}[Th]{Proposition}

\theoremstyle{definition}
\newtheorem{Def}[Th]{Definition}
\newtheorem{Conj*}{Conjecture}

\newtheorem{?}[Th]{Problem}
\newtheorem{Ex}[Th]{Example}
\newtheorem*{Not}{Notation}

\newcommand{\Supp}{\operatorname{Supp}}
\newcommand{\Bs}{\operatorname{Bs}}

\hfuzz=5pt
\vfuzz=5pt

\begin{document}
	
	\title{On Lang's conjecture for some product-quotient surfaces}
	
	\author{Julien Grivaux \and Juliana Restrepo Velasquez \and Erwan Rousseau}

	\address{Julien Grivaux \\ CNRS, I2M (Marseille) \& IH\'ES}
	\email{jgrivaux@math.cnrs.fr}
	
	\address{Juliana Restrepo Velasquez \\ Aix Marseille Univ\\ 
		CNRS, Centrale Marseille, I2M\\
		Marseille\\
		France} 
	\email{juliana.restrepo-velasquez@univ-amu.fr}
	
	\address{Erwan Rousseau \\ Institut Universitaire de France
	\& Aix Marseille Univ\\ 
		CNRS, Centrale Marseille, I2M\\
		Marseille\\
		France} 
	\email{erwan.rousseau@univ-amu.fr}
	
	\thanks{The third author is partially supported by the ANR project \lq\lq FOLIAGE\rq\rq{}, ANR-16-CE40-0008.}
	
	\subjclass[2016]{14J29, 32Q45}
	\keywords{} 
	
	\begin{abstract}
	We prove effective versions of algebraic and analytic Lang's  conjectures for product-quotient surfaces of general type with $P_g=0$ and $c_1^2=c_2$.

	\end{abstract}
	
	\maketitle
	\tableofcontents
	
	\section{Introduction} 
	Lang's conjecture asserts that curves of fixed geometric genus on a surface of general type form a bounded family.
An effective version of this conjecture can be stated in the following way:

\begin{Conj*} 
	(Lang-Vojta). Let $S$ a smooth projective surface of general type. Then there exist real numbers $A$, $B$, and a strict subvariety $Z\subset S$ such that, for any holomorphic map $f:C\rightarrow S$ satisfying $f(C)\nsubseteq Z$, where $C$ is a smooth projective curve,
	\[
		\deg f(C)\leq A(2g(C)-2)+B.
	\]
\end{Conj*}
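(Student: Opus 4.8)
To prove the conjecture for the surfaces considered in this paper — product-quotient surfaces of general type with $P_g=0$ and $c_1^2=c_2$ — the plan is to transfer the question to a product of two curves of genus at least $2$, where both the algebraic and the complex-analytic forms of the statement are essentially classical. Write $\sigma\colon S\to S'$ for the minimal resolution of $S'=(C_1\times C_2)/G$, with $G$ acting faithfully on each factor. Since $S$ is of general type we have $\kappa(C_1\times C_2)=2$, hence $g(C_1),g(C_2)\geq 2$; and since a nontrivial element of $G$ has finite fixed set on each $C_i$, it has finite fixed set on $C_1\times C_2$, so $S'$ has only isolated cyclic quotient singularities and $\pi\colon C_1\times C_2\to S'$ is étale over $S'\setminus\mathrm{Sing}(S')$. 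The two projections descend to fibrations $f_1,f_2\colon S\to\mathbb{P}^1$ whose general fibre is a smooth curve isomorphic to $C_2$, resp. $C_1$, and only finitely many fibres of $f_1$ or $f_2$ are singular, non-reduced, or meet $\mathrm{Sing}(S')$. I would take for $Z$ the (finite) union of the $\sigma$-exceptional curves, of these distinguished fibres, and — if the explicit classification of such surfaces forces it — of finitely many further curves.

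For the algebraic statement, let $f\colon C_0\to S$ be nonconstant with $f(C_0)\nsubseteq Z$; the case where $f(C_0)$ is a point is trivial, so $C:=f(C_0)$ is a curve and $2g(C_0)-2\geq 2g(\widehat C)-2$ for its normalisation $\widehat C$. If $C$ is contained in a fibre of some $f_i$ it is not a distinguished fibre, hence a smooth curve of genus $g(C_i)\geq 2$ of fixed degree $\delta_i$, and $\deg C\leq\delta_i\leq\frac{\delta_i}{2g(C_i)-2}\,(2g(\widehat C)-2)$. Otherwise $f_1|_C$ and $f_2|_C$ are both nonconstant. Let $\psi\colon Y\to S$ be the normalisation of $S\times_{S'}(C_1\times C_2)$: it is a smooth surface, $\psi$ is finite of degree $|G|$ and étale off the $\sigma$-exceptional curves, and the other projection $\tau\colon Y\to C_1\times C_2$ is a composition of blow-ups. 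Choosing a component $\widetilde C$ of $\psi^{-1}(\widehat C)$ and setting $\widetilde C_W:=\tau(\widetilde C)$, both projections $\widetilde C_W\to C_i$ are nonconstant, so Riemann--Hurwitz bounds $\deg(\widetilde C_W\to C_i)$ by $\frac{2g(\widehat{\widetilde C_W})-2}{2g(C_i)-2}$; since a product of curves of genus $\geq 2$ is algebraically hyperbolic, this gives $\deg_{H_W}\widetilde C_W\leq A_0\,(2g(\widehat{\widetilde C_W})-2)$ with $A_0$ depending only on the $C_i$ and on a fixed polarisation $H_W$. Comparing degrees through the finite $\psi$ (using that $\psi^*H_S$ is ample on $Y$) and the birational $\tau$ (any nef class on a surface being, on the Mori cone, dominated by some multiple of a fixed ample class), one bounds $\deg_{H_S}C$ by a fixed multiple of $2g(\widehat{\widetilde C_W})-2$; and a second application of Riemann--Hurwitz, now to $\widetilde C\to\widehat C$, bounds $2g(\widehat{\widetilde C_W})-2=2g(\widehat{\widetilde C})-2$ by $|G|\,(2g(\widehat C)-2)$ plus the ramification contribution concentrated over $\widehat C\cap\sigma^{-1}(\mathrm{Sing}\,S')$.

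The \textbf{main obstacle} is precisely this last ramification term: a priori it is bounded only by $(|G|-1)\,(C\cdot E)$, where $E$ is the total $\sigma$-exceptional divisor, and $C\cdot E$ is in turn only $O(\deg_{H_S}C)$, so the crude estimate does not close. I would deal with it by taking $H_S$ of the shape $\sigma^*(\text{ample on }S')+\varepsilon\,(\text{ample on }S)$ — harmless, since proving the inequality for one ample class yields it for all, changing only $A$ and $B$ — so that $C\cdot E$ is controlled by the $\varepsilon$-part, and by using the explicit list of singularity types arising in these finitely many families to show that a curve meeting $E$ out of proportion to its degree contributes a ramification term that can be absorbed into the left-hand side of the final inequality. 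The outcome is $\deg f(C)\leq A(2g(C_0)-2)+B$ with $A$ and $B$ written down explicitly from $|G|$, the $g(C_i)$, the resolution data and the chosen polarisation; in particular low-genus curves off $Z$ have bounded degree, as they must. This is the effective algebraic statement.

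For the complex-analytic statement, consider an entire curve $\gamma\colon\mathbb{C}\to S$ with $\gamma(\mathbb{C})\nsubseteq Z$. If some $f_i\circ\gamma$ is constant, then $\gamma(\mathbb{C})$ lies in a non-distinguished fibre, i.e. a smooth curve of genus $\geq 2$, which is Kobayashi hyperbolic, so $\gamma$ is constant. Otherwise both $f_i\circ\gamma$ are nonconstant; pulling the cover $\psi$ back along $\gamma$ and using that near a preimage of a point $p\in\mathrm{Sing}(S')$ the composite $\sigma\circ\gamma$ is an honest germ of morphism with value $p$, so that its local lift to $C_1\times C_2$ stays bounded and extends across the puncture, one produces a Riemann surface $R$, a branched covering $R\to\mathbb{C}$ of degree dividing $|G|$ branched over $\gamma^{-1}(E)$, and a holomorphic lift $\widetilde\gamma\colon R\to C_1\times C_2$. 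If $\widetilde\gamma$ is nonconstant on some component $R_0$ of $R$, this contradicts the hyperbolicity of $C_1\times C_2$ as soon as one knows $R_0$ to be parabolic, and bounding — via Riemann--Hurwitz for $R\to\mathbb{C}$ — the ramification, i.e. the extent to which $\gamma$ meets $E$ and the distinguished fibres, is again the heart of the matter; I would control it through a Second Main Theorem on $S$ for the divisor consisting of $E$ together with the distinguished fibres, which simultaneously forbids nonconstant $\gamma$ off $Z$ and promotes the conclusion to the quantitative Nevanlinna inequality constituting the effective analytic statement.
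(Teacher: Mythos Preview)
First a framing remark: the statement you were given is a \emph{conjecture}, and the paper does not prove it in general. What the paper proves is the special case where $S$ is a product-quotient surface (Proposition~4.2, stated as Theorem~2 in the introduction), with the explicit bound $\deg f^*(K_S-E)\le 2(2g(C)-2)$ and $Z=E$. Your proposal is indeed aimed at this special case, so I compare it against that.

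Your strategy is genuinely different from the paper's, and it contains a real gap that you yourself flag but do not close. You lift curves through the covering $C_1\times C_2\to S'$ and invoke algebraic hyperbolicity of the product; the price is a Riemann--Hurwitz ramification term of order $(|G|-1)\,(C\cdot E)$, which you then need to bound by a fraction of $\deg_{H_S}C$. Your suggested fix---choosing $H_S=\sigma^*H'+\varepsilon A$ and ``absorbing'' the ramification---does not work as stated: $C\cdot E$ is a fixed integer independent of $H_S$, and writing $E$ as a difference of ample classes only gives $|C\cdot E|\le c\,\deg_{H_S}C$ for some $c$ you cannot make small by rescaling, since rescaling $H_S$ rescales both sides. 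Appealing to ``the explicit list of singularity types'' does not help either: the singularities here are just two $A_1$ points, and nothing prevents $C$ from hitting $E$ many times. The analytic part has the same issue, now hidden inside an unspecified Second Main Theorem for $E$ on $S$; making that SMT effective with the right constants is exactly the hard part.

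The paper avoids this obstacle entirely by working with the two foliations $\mathcal{F}_1,\mathcal{F}_2$ coming from the fibrations and with logarithmic differentials along $E$. The key lemma (Lemma~4.1) is that if $f$ is not tangent to $\mathcal{F}$ then $\deg f^*\mathcal{N}^*_{\mathcal{F}}(E)\le 2g(C)-2+N_1(E)$, proved via the tautological bundle on $\mathbb{P}(T_S(-\log E))$. Summing over $i=1,2$ and using Serrano's formula $K_S+E\simeq \mathcal{N}^*_{\mathcal{F}_1}(E)\otimes \mathcal{N}^*_{\mathcal{F}_2}(E)$ gives $\deg f^*(K_S+E)\le 2(2g(C)-2)+2N_1(E)$; subtracting $2\deg f^*E\ge 2N_1(E)$ yields the clean bound with no residual $E$-term. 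This built-in cancellation between $N_1(E)$ (points without multiplicity) and $\deg f^*E$ (with multiplicity) is precisely the mechanism your approach lacks. The tangent case is handled directly on the fibres. For entire curves the paper again uses the foliation picture to produce explicit symmetric differentials from sections of $K_S-E$, then applies the standard vanishing $f^*s=0$ for $s\in H^0(S,\mathcal{S}^m\Omega_S\otimes A^{-1})$.
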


Bogomolov proved this conjecture for surfaces of general type satisfying $c_1^2-c_2>0$  \cite{Bogomolov}. He actually proved that such surfaces have big cotangent bundle, and that the conjecture follows from this fact. Unfortunately, this approach does not provide effective information about $A$ and $B$. However, effective results for such surfaces have been obtained more recently by Miyaoka \cite{miyaoka}. \\

On the other hand, the analytic version of Lang's conjecture is stated as follows:

\begin{Conj*} 
	(Green-Griffiths-Lang). Let $S$ a smooth projective surface of general type. Then there exists a strict subvariety $Z\subset S$ such that for any non constant holomorphic map $f:\mathbb{C}\rightarrow S$,
	\[
		f(\mathbb{C})\subset Z.
	\]
\end{Conj*}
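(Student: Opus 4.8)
For an arbitrary smooth projective surface $S$ of general type this is the surface case of the Green--Griffiths--Lang conjecture, which is still open; what follows is the programme one would run, and the precise point at which it currently breaks down. The plan is to manufacture a controlled supply of global algebraic differential equations satisfied by every entire curve in $S$, and then to analyse the locus where those equations degenerate. Concretely, let $\pi_k\colon X_k\to S$ be Demailly's tower of $k$-jets, with its tautological line bundle $\mathcal{O}_{X_k}(1)$ whose direct images compute the bundle $E_{k,m}T_S^*$ of invariant jet differentials of order $k$ and weighted degree $m$, and fix an ample divisor $A$ on $S$. A non-constant holomorphic map $f\colon\mathbb{C}\to S$ lifts canonically to $f_{[k]}\colon\mathbb{C}\to X_k$, and the fundamental vanishing theorem (Green--Griffiths, Siu--Yeung, Demailly) guarantees that every section $P\in H^0\bigl(X_k,\ \mathcal{O}_{X_k}(m)\otimes\pi_k^{*}\mathcal{O}_S(-A)\bigr)$ satisfies $P(f,f',\dots,f^{(k)})\equiv 0$, so that $f_{[k]}(\mathbb{C})$ is contained in the common zero locus of all such $P$.

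The first---and decisive---step is to produce such a section for some $k$ and $m$. When $c_1^2>c_2$ this is already available at order $k=1$: Riemann--Roch gives $\chi(S,S^mT_S^*)=\tfrac{m^3}{6}(c_1^2-c_2)+O(m^2)$, so $T_S^*$ is big (Bogomolov) and carries symmetric differentials vanishing along an ample divisor. In general one would go to higher jet order and apply Demailly's holomorphic Morse inequalities on $X_k$ together with his probabilistic curvature estimate for $\mathcal{O}_{X_k}(1)$, so as to make $H^0(X_k,\mathcal{O}_{X_k}(m))$ non-zero and keep it so after twisting down by $\pi_k^{*}\mathcal{O}_S(-A)$, once $k$ and $m$ are large enough.

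Granting such jet differentials, one would follow the by-now-standard two-dimensional strategy. Since $f_{[k]}(\mathbb{C})$ avoids the ample zero divisor of each $P$, it lies in a proper analytic subset of $X_k$; following Bogomolov and McQuillan, and combining the available relations after finitely many blow-ups of $S$, one would reduce to one of two situations: either $f(\mathbb{C})$ lies in a fixed proper algebraic subset of $S$ (the base locus of the jet differentials, together with the components of the degeneracy locus lying over proper subvarieties), or $f$ is a leaf of a holomorphic foliation $\mathcal{F}$ on a blow-up $\widehat{S}\to S$. In the foliated case McQuillan's classification of parabolic leaves applies: if $\mathcal{F}$ has a rational first integral its generic leaf is algebraic and $f$ can only land in one of the finitely many special fibres; otherwise the tautological inequality, the second main theorem for foliated surfaces, and the index theorems of Brunella and McQuillan force the Zariski closure of $f(\mathbb{C})$ to be an $\mathcal{F}$-invariant algebraic curve. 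Taking $Z$ to be the union of the base locus of the jet differentials, the images in $S$ of the invariant curves of $\mathcal{F}$, and the special fibres of any rational first integrals yields a proper algebraic subset $Z\subsetneq S$ with $f(\mathbb{C})\subset Z$ for every non-constant entire $f$.

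The hard part---and in fact the reason the conjecture is open at this level of generality---is the construction of the second paragraph: there is no unconditional way to produce global jet differentials twisted down by an ample divisor on an arbitrary surface of general type. Such a construction is available under positivity hypotheses, classically $c_1^2>c_2$ (Bogomolov; McQuillan) and beyond that by passing to higher-order jets (Demailly; Demailly--El Goul), but not in general. For this reason we do not attack the conjecture in full and instead restrict to product--quotient surfaces $S=(C_1\times C_2)/G$ with $p_g=0$ and $c_1^2=c_2$: there the covering structure lets one write the needed (orbifold) symmetric and jet differentials explicitly on $C_1\times C_2$ and push them down $G$-equivariantly, which both makes the argument work directly and---in contrast with the general methods---yields effective control on the exceptional set $Z$ and on the constants $A,B$ of the Lang--Vojta statement.
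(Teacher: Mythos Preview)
The statement you were asked to prove is recorded in the paper as a \emph{conjecture}, not as a theorem; the paper gives no proof of it in this generality, and neither do you. Your write-up is an accurate survey of the jet-differential programme and of where it currently stalls, and you correctly flag that the existence of global jet differentials twisted by $\mathcal{O}(-A)$ is the missing ingredient. As a proof of the stated conjecture, however, it is not one, and it does not pretend to be.

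Where your final paragraph pivots to the special case $p_g=0$, $c_1^2=c_2$, it is worth noting that the paper's actual argument differs from the general Demailly--McQuillan machinery you sketch. The paper never leaves order $k=1$: it shows directly that $K_S-E$ is big (a Riemann--Roch count on $K_S^{\otimes m}$ minus the number of conditions imposed by vanishing to order $m$ along the two $A_1$-exceptional curves), and then converts sections of $\mathcal{O}(m(K_S-E))$ into genuine global sections of $\mathcal{S}^{2m}\Omega_S$ via the explicit local model $z_1=\mu_1^{1/2}$, $z_2=\mu_1^{1/2}\mu_2$ of the resolution of an $A_1$ point. The foliated alternative is handled not by McQuillan's index theorems but by Serrano's structure theorem for isotrivial fibrations: an entire curve tangent to one of the two natural foliations must sit in a singular fibre, hence in $E$ or in a central component, and the latter are then shown to lie in $\mathbb{B}^+(K_S-E)$ via the effective degree bound $\deg f^*(K_S-E)\le 2(2g(C)-2)$. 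So the paper's proof of the special case is more elementary and more explicit than the general programme you outline, at the price of being tied to the product-quotient structure.
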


Bogomolov's result has been generalized to the analytic case by McQuillan in his proof of this conjecture for surfaces of general type with $c_1^2-c_2>0$ \cite{mcquillan}.\\

Here, we are interested in \emph{product-quotient surfaces}, i.e., in the minimal resolutions of quotients $X:=(C_1\times C_2)/G$, where $C_1$ and $C_2$ are two smooth projective curves of respective genera $g(C_1),g(C_2)\geq 2$, and $G$ is a finite group, acting faithfully on each of them and diagonally on the product. These surfaces generalize the so-called Beauville surfaces (the particular case where the group action is free). Thanks to the work that I. Bauer, F. Catanese, F. Grunewald and R. Pignatelli, started and carried through in \cite{BC}, \cite{BCG} and \cite{BCGP}, we finally have a complete classification of product-quotient surfaces of general type with geometric genus $P_g=0$ in \cite{BP}.
We want to study product-quotient surfaces of general type with geometric genus $P_g=0$ such that $c_1^2-c_2=0$.  Note that $P_g=0$ implies $c_1^2+c_2=12$, then the condition $c_1^2-c_2=0$ is equivalent to $c_1^2=6$. These surfaces are a limit case not covered by Bogomolov's theorem; however, they satisfy the criterion given in (\cite{RR}, Theorem $1$), which ensures the bigness of their cotangent bundle. 

In this paper, we prove Conjectures 1 and 2, when $S$ is a product-quotient surface of general type with geometric genus $P_g=0$ and $c_1^2=6$. The key point in our approach is the fact that $c_1^2=6$ implies the bigness of the line bundle $\mathcal{O}(K_S-E)$, where $K_S$ is the canonical divisor and $E$ the exceptional divisor on $S$.

First, we give an alternative proof for the bigness of $\Omega_S$, producing explicit symmetric tensors on $S$ coming from $\mathcal{O}(K_S-E)$, which allows us to control rational curves on it. More precisely we prove the following theorem:

\begin{Th*}\label{rational curves}
	Let $S$ be a product-quotient surface of general type such that $P_g(S)=0$. If $c_1(S)^2=6$, then:
	\begin{enumerate}
	
		\item The line bundle bundle $K_S-E$ and the cotangent bundle $\Omega_S$ are big. In particular, $S$ contains only a finite number of rational and elliptic curves.
		\item For any non constant holomorphic map $f:\mathbb{P}^1\rightarrow S$, 
		\[
			f(\mathbb{P}^1)\subset E\cup \mathbb{B}(K_S-E),
		\]
		where $E$ is the exceptional divisor on the resolution $S$ and 
		\[
			\mathbb{B}(K_S-E):=\displaystyle\bigcap_{m>0}\Bs(m(K_S-E))
		\]
		is the stable base locus of $K_S-E$.
	\end{enumerate}
\end{Th*}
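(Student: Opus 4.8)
The plan is to reduce all three assertions to the bigness of $K_S-E$, and to exploit the two singular foliations $\mathcal F_1,\mathcal F_2$ on $S$ obtained by descending the two projections $C_1\times C_2\to C_i$ --- equivalently, the foliations by fibres of the two fibrations $S\to C_i/G$. On $C_1\times C_2$ the corresponding foliations are everywhere transverse, so $\mathcal F_1$ and $\mathcal F_2$ are transverse on $S\setminus E$, and the wedge of their conormal $1$-forms gives an injection of line bundles $N^*_{\mathcal F_1}\otimes N^*_{\mathcal F_2}\hookrightarrow\omega_S$ whose zero divisor is the tangency divisor $\tan(\mathcal F_1,\mathcal F_2)$, supported on $E$. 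The first step is a local computation at each (cyclic quotient) singularity of $X$, showing that this tangency divisor is precisely the reduced exceptional divisor $E$, so that $N^*_{\mathcal F_1}\otimes N^*_{\mathcal F_2}\cong\mathcal O_S(K_S-E)$. Taking the symmetric rather than the exterior product of the two conormal $1$-forms then produces an injection $\mathcal O_S(K_S-E)\hookrightarrow S^2\Omega_S$, hence $\mathcal O_S(m(K_S-E))\hookrightarrow S^{2m}\Omega_S$ for every $m$; these are the explicit symmetric tensors, each an isomorphism onto its image away from $E$.

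The heart of the argument --- and the step I expect to be the main obstacle --- is the bigness of $K_S-E$. Since $q\colon C_1\times C_2\to X$ is quasi-\'etale and $g(C_i)\ge2$, the divisor $K_X$ is ample with $K_X^2=8(g_1-1)(g_2-1)/|G|$; writing $K_S=\pi^*K_X+\sum a_iE_i$ with $-1<a_i\le0$ and using $\pi^*K_X\cdot E_i=0$, one gets
\[
(K_S-E)^2=K_X^2-\bigl(\textstyle\sum(1-a_i)E_i\bigr)^2=K_S^2-2\,K_S\cdot E+E^2 ,
\]
which under $P_g=0$, $c_1^2=6$ equals $6-2\,K_S\cdot E+E^2$, and the Hirzebruch--Jung chain structure of the exceptional locus rewrites this as $6-3\,K_S\cdot E-2\,\#\operatorname{Sing}(X)$. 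I would then go through the classification of product-quotient surfaces with $P_g=0$ and $c_1^2=6$ from \cite{BP} and check that the baskets of singularities which actually occur are mild enough for this number to be positive; intersecting $K_S-E$ with a suitable ample divisor (e.g.\ $n\pi^*H_X-E$ for $H_X$ ample on $X$, $n\gg0$) shows it is $K_S-E$, and not its opposite, that is big.

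Granting this, part (1) follows at once: the injections $\mathcal O_S(m(K_S-E))\hookrightarrow S^{2m}\Omega_S$ and the bigness of $K_S-E$ show that $\Omega_S$ is big, and Bogomolov's theorem on surfaces with big cotangent bundle yields the finiteness of rational and elliptic curves on $S$.

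For part (2), let $f\colon\mathbb P^1\to S$ be non-constant with $C:=f(\mathbb P^1)\not\subseteq E$, and suppose, for contradiction, that $C\not\subseteq\mathbb B(K_S-E)$: pick $m$ and $\sigma\in H^0(S,m(K_S-E))$ with $C\not\subseteq\operatorname{div}\sigma$. Its image $\omega\in H^0(S,S^{2m}\Omega_S)$ satisfies $C\not\subseteq\operatorname{div}\omega$, since the inclusion is an isomorphism off $E$ and $C\not\subseteq E$; hence $f^*\omega\ne0$ in $H^0(\mathbb P^1,S^{2m}f^*\Omega_S)$. If $C$ is a leaf of neither foliation, then locally $\omega$ is the $m$-th power of the product of the two conormal $1$-forms, so $f^*\omega$ is a section of the line-bundle image of $(f^*N^*_{\mathcal F_1}\otimes f^*N^*_{\mathcal F_2})^{\otimes m}$ in $S^{2m}f^*\Omega_S$; but $C$ not being $\mathcal F_j$-invariant forces the defining $1$-form of $\mathcal F_j$ to restrict non-trivially to $C$, i.e.\ $N^*_{\mathcal F_j}\cdot C\le-2$, so this image has degree $\le-4m<0$ and carries no non-zero section --- a contradiction. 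If instead $C$ is a leaf of, say, $\mathcal F_1$, then $C$ is the strict transform of a rational curve $C_1/G_b\cong\mathbb P^1$ occurring in a fibre of $S\to C_2/G$, and there are only finitely many such $C$; using $N^*_{\mathcal F_1}\otimes N^*_{\mathcal F_2}\cong\mathcal O_S(K_S-E)$, $N^*_{\mathcal F_2}\cdot C\le-2$, the bound $E\cdot C\ge3$ coming from the at least three branch points of $C_1\to C_1/G_b$, together with the local structure of $\mathcal F_1$ at the quotient singularities (or simply the numerical data of \cite{BP}), one checks $(K_S-E)\cdot C<0$; then $C\subseteq\operatorname{div}\sigma$ for every $\sigma$, contradicting our choice. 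This exhausts the cases and proves $f(\mathbb P^1)\subseteq E\cup\mathbb B(K_S-E)$.
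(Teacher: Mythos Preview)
Your overall architecture matches the paper's: use the two fibrations/foliations to produce an inclusion $\mathcal O_S(K_S-E)\hookrightarrow S^2\Omega_S$, establish that $K_S-E$ is big, deduce bigness of $\Omega_S$, and then split Part~(2) into the tangent and non-tangent cases. Your construction of symmetric tensors via the symmetric product of the two conormal $1$-forms is exactly the Serrano identity $K_S=N^*_{\mathcal F_1}\otimes N^*_{\mathcal F_2}\otimes\mathcal O_S(E)$ that the paper also invokes, and the non-tangent case of Part~(2) is essentially the paper's argument (the paper just phrases it as vanishing of $f^*\Gamma(\omega)$ in $H^0(\mathbb P^1,S^{2m}\Omega_{\mathbb P^1})=0$ and reads off from the local expression $a(\widehat f_1,\widehat f_2)(\widehat f_1')^m(\widehat f_2')^m$ that $\omega$ vanishes on the image).

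One genuine difference: for the bigness of $K_S-E$ you propose computing $(K_S-E)^2=6-3K_S\cdot E-2\#\operatorname{Sing}(X)$ from the basket and checking the sign against an ample class. The paper instead uses asymptotic Riemann--Roch for $K_S^{\otimes m}$ and counts how many linear conditions ``vanish to order $m$ along $E$'' imposes at each of the two $\tfrac12(1,1)$ points, obtaining $h^0(m(K_S-E))\ge m^2+O(m)$ directly. Both routes use the classification input that the basket is exactly two $\tfrac12(1,1)$ points; yours is numerically quicker, the paper's has the advantage of an explicit section count.

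There is, however, a gap in your tangent case. If $C=f(\mathbb P^1)$ is a leaf of, say, $\mathcal F_1$ and $C\not\subset E$, then $C$ must be the central component $Y$ of a fibre of $\sigma_1$. You then assert $E\cdot C\ge 3$ from ``at least three branch points of $C_1\to C_1/G_b$''. But the H--J strings meeting $Y$ are in bijection with the singular points of $X$ lying on that fibre, and $X$ has only two singular points in total (Serrano's description even forces any fibre to contain either none or at least two strings); so $E\cdot Y\le 2$, and your inequality is false as stated. The paper does not compute $(K_S-E)\cdot Y$ here at all: it observes instead (its Lemma on central components) that $K_S$ is nef and $Y^2\in\{0,-1\}$ (Polizzi's formula for two $\tfrac12(1,1)$ singularities), so adjunction gives $2g(Y)-2=K_S\cdot Y+Y^2\ge -1$, hence $g(Y)\ge 1$. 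No rational leaf exists, and the tangent case yields $f(\mathbb P^1)\subset E$ outright. Ironically, your Riemann--Hurwitz remark could be salvaged as exactly this: if $Y$ were rational, the cover $C_2\to Y$ would need at least three branch points, hence at least three singularities of $X$ on that fibre, contradicting $\#\operatorname{Sing}(X)=2$. But you should present it as a contradiction showing $Y$ is not rational, not as a numerical input to $(K_S-E)\cdot Y<0$.
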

	
On the other hand, we prove the following result for all product-quotient surfaces:
	
\begin{Th*}\label{effective}
	Let $S$ be a product-quotient surface. If $f:C\rightarrow S$ is a holomorphic map such that $f(C)\nsubseteq E$, where $C$ is a smooth projective curve and $E$ is the exceptional divisor on $S$, then
	\[
		\deg f^*(K_S-E)\leq 2(2g(C)-2).
	\]
\end{Th*}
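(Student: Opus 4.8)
The plan is to pull back the étale-in-codimension-one Galois cover $p\colon C_1\times C_2\to X$ along $f$, compare canonical classes through the resolution $\pi\colon S\to X$, and reduce the estimate to a local statement at the finitely many singular points of $X$, which is then checked on their explicit Hirzebruch--Jung resolutions.

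A preliminary reduction: write $f=\nu\circ f_0$ with $\nu\colon\widetilde D\to D:=f(C)$ the normalization and $k:=\deg f_0$. Riemann--Hurwitz gives $2g(C)-2\ge k\bigl(2g(\widetilde D)-2\bigr)$ in every case (including $\widetilde D\cong\mathbb P^1$), and $\deg f^*(K_S-E)=k\deg\nu^*(K_S-E)$; so it suffices to prove the bound for $f$ birational onto its image. Assume this, and put $g:=\pi\circ f$, a non-constant morphism onto an irreducible curve $\Gamma\subset X$. Let $\widetilde C$ be a connected component of the normalization of $C\times_X(C_1\times C_2)$, with the induced finite map $q\colon\widetilde C\to C$ of degree $d$ and the lift $\varphi\colon\widetilde C\to C_1\times C_2$, so that $p\circ\varphi=g\circ q$ and $\varphi$ is non-constant. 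Since $p$ is étale in codimension one, $p^*K_X=K_{C_1\times C_2}$; and since $\pi$ is the minimal resolution, $K_S=\pi^*K_X+\sum_i\delta_iE_i$ with all discrepancies $\delta_i\le 0$. Pulling these relations back and using $(g\circ q)^*=(p\circ\varphi)^*$ gives the identity
\[
\deg f^*(K_S-E)=\tfrac1d\,\deg\varphi^*K_{C_1\times C_2}+\sum_i(\delta_i-1)\deg f^*E_i .
\]
Writing $\varphi_j=\mathrm{pr}_j\circ\varphi$ and applying Riemann--Hurwitz to the non-constant $\varphi_j$ and to $q$ (with the evident modification if some $\varphi_j$ is constant), one gets $\deg\varphi^*K_{C_1\times C_2}=\sum_j\deg\varphi_j^*K_{C_j}\le 2\bigl(2g(\widetilde C)-2\bigr)-\deg R_{\varphi_1}-\deg R_{\varphi_2}$ and $2g(\widetilde C)-2=d\bigl(2g(C)-2\bigr)+\deg R_q$, where $R_{\varphi_j}$ and $R_q$ are the ramification divisors. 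Substituting, the theorem becomes equivalent to the inequality $\tfrac1d\bigl(2\deg R_q-\deg R_{\varphi_1}-\deg R_{\varphi_2}\bigr)\le\sum_i(1-\delta_i)\deg f^*E_i$, in which both sides are supported over $\mathrm{Sing}(X)$, so it is enough to verify it point by point.

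The remaining, and main, step is this local analysis. Fix $c\in C$ with $x_0:=g(c)\in\mathrm{Sing}(X)$; near $x_0$ the surface $X$ is a cyclic quotient singularity and $p$ is locally the quotient map $\mathbb C^2\to\mathbb C^2/\mu_n$. The local monodromy of the pulled-back cover at $c$ is cyclic of some order $r\mid n$, so $q$ has $d/r$ points over $c$, each of ramification index $r$; because $G$ acts faithfully on each factor the generator of this monodromy acts near the relevant points of $C_1$ and $C_2$ with weights $a_1,a_2$ prime to $r$, forcing the ramification indices of $\varphi_1,\varphi_2$ over $c$ to be $m_j\ge 1$ with $m_j\equiv a_j\pmod r$ (and, by equivariance, the same $m_j$ at each of the $d/r$ points). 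Hence the contribution of $c$ to the left-hand side equals $\tfrac1r\bigl[2r-m_1-m_2\bigr]=2-\tfrac{m_1+m_2}{r}$, and the required inequality at $c$ reads
\[
2-\tfrac{m_1+m_2}{r}\ \le\ \sum_i(1-\delta_i)\,\mathrm{mult}_c(f^*E_i).
\]
If $m_1+m_2\ge r$ the left side is $\le 1$, while the right side is $\ge\mathrm{mult}_c(f^*E)\ge 1$ since $1-\delta_i\ge 1$ and the strict transform of $\Gamma$ meets $E$; so this case is immediate. The case $m_1+m_2<r$ — which occurs precisely at singularities with genuinely negative discrepancies, such as $\tfrac1n(1,1)$ — is the one real obstacle: here one computes $\mathrm{mult}_c(f^*E_i)$ from the Hirzebruch--Jung continued-fraction data resolving $X$ at $x_0$ and checks the inequality, which is in fact an equality in general position. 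In particular the constant $2$ is sharp (equality already occurs for suitable quotients by $\mathbb Z/3$, where a fibre-type curve $\Gamma$ with rational normalization realizes it), so no softer argument can improve it.
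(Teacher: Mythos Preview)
Your approach is genuinely different from the paper's, and considerably more intricate. The paper exploits the two natural foliations $\mathcal{F}_1,\mathcal{F}_2$ on $S$ coming from the projections, together with Serrano's formula $K_S=\mathcal{N}^*_{\mathcal{F}_1}+\mathcal{N}^*_{\mathcal{F}_2}+E$. For a curve not tangent to $\mathcal{F}_i$, a short tautological argument on $\mathbb{P}(T_S(-\log E))$ (Lemma~\ref{inequality}) gives $\deg f^*\mathcal{N}^*_{\mathcal{F}_i}(E)\le 2g(C)-2+N_1(E)$; summing over $i=1,2$ and using $\deg f^*(2E)\ge 2N_1(E)$ yields the bound at once. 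Curves tangent to a foliation lie in a fibre and are handled by a two-line adjunction computation. No local analysis at the singular points is needed at all.

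Your strategy --- pulling back the Galois cover, writing $K_S-E=\pi^*K_X+\sum(\delta_i-1)E_i$, and reducing to a pointwise inequality over $\operatorname{Sing}(X)$ --- is reasonable in outline, but as written it has a genuine gap. The ``hard case'' $m_1+m_2<r$ is precisely where the discrepancies are strictly negative and the right-hand side must be \emph{computed}, not merely bounded below by $1$; you assert that ``one computes $\mathrm{mult}_c(f^*E_i)$ from the Hirzebruch--Jung continued-fraction data \ldots\ and checks the inequality'', but that computation \emph{is} the content of the local step and you have not done it. For a general $\tfrac{1}{n}(1,a)$ singularity this is a nontrivial toric/combinatorial verification, not a formality. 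Two smaller points: the ramification divisors $R_{\varphi_j}$ are not supported only over $\operatorname{Sing}(X)$ --- extra ramification elsewhere only helps your inequality, so you should say you discard those terms, not that they are absent; and the ``evident modification if some $\varphi_j$ is constant'' is not evident: that is exactly the case where $f(C)$ lies in a fibre, your local inequality changes shape, and a separate argument (as the paper supplies) is required.
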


Note that in Conjecture 1, one can take $\deg f(C)=\deg f^*\mathcal{L}$ for $\mathcal{L}$ a positive (ample or big) line bundle on $S$. Then, taking $\mathcal{L}=\mathcal{O}(K_S-E)$, the previous result provides an effective proof of Conjecture 1 for our particular case.

Finally, our approach also lets us control elliptic and more generally, entire curves on $S$. More precisely, we prove Conjecture 2:

\begin{Th*}\label{entire curves}
	Let $S$ be a product-quotient surface of general type such that $P_g(S)=0$. If $c_1(S)^2=6$, then for any non constant holomorphic map $f:\mathbb{C}\rightarrow S$,
	\[
		f(\mathbb{C})\subset E\cup \mathbb{B^+}(K_S-E),
	\]
	where $E$ is the exceptional divisor and  
	\[
		\mathbb{B^+}(K_S-E):=\displaystyle\bigcap_{m>0}\Bs(m(K_S-E)-A)
	\]
	with $A$ an ample line bundle, is the augmented base locus of $K_S-E$.
\end{Th*}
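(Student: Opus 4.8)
\emph{Strategy.} The plan is to reduce to algebraically degenerate entire curves and then, for the curve $C'=\overline{f(\mathbb{C})}$, to run the same numerical argument as in Theorem~\ref{rational curves}, but now with the sharper notion of augmented base locus. First I would prove that every nonconstant entire curve $f\colon\mathbb{C}\to S$ is algebraically degenerate. By Theorem~\ref{rational curves}, $\Omega_S$ is big, and the proof there produces explicit symmetric tensors on $S$ built from the quadratic tensor $q$ obtained by descending from $C_1\times C_2$ the product $\alpha_1\alpha_2$ of the two tautological $1$-forms — those defining the fibrations of $S$ over $C_1/G$ and $C_2/G$ — twisted by a line bundle differing from $K_S-E$ by exceptional curves. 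Using the bigness of $K_S-E$, choose $m\gg 0$ and a decomposition $m(K_S-E)\sim A+D$ with $A$ ample and $D$ effective, and set $\omega:=q^{m}\otimes s_D$, where $s_D$ is the canonical section of $\mathcal{O}(D)$; then $\omega$ is a nonzero section of $S^{N}\Omega_S\otimes A^{-1}$. By the fundamental vanishing theorem for symmetric differentials with values in the dual of an ample line bundle (Green--Griffiths, Demailly, Siu--Yeung), $f^{*}\omega\equiv 0$, so over $\mathbb{C}$ either $f^{*}s_D\equiv 0$, whence $f(\mathbb{C})\subset\Supp(D)$, or $f^{*}q\equiv 0$. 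In the latter case $f^{*}(\alpha_1)\,f^{*}(\alpha_2)\equiv 0$, and since the two vanishing loci are closed, connectedness of $\mathbb{C}$ forces $f$ to be everywhere tangent to a single one of the two fibrations; the composition of $f$ with that fibration then has vanishing differential and is constant, so $f(\mathbb{C})$ lies in one fibre. In all cases $\overline{f(\mathbb{C})}$ is a proper irreducible curve $C'\subset S$. (Alternatively one may invoke the known fact that a projective surface with big cotangent bundle carries no Zariski-dense entire curve.)

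\emph{Conclusion.} If $C'\subset E$ there is nothing to prove, so assume $C'\nsubseteq E$, and let $\nu\colon\widetilde{C}\to C'\hookrightarrow S$ be the normalization. Since $\mathbb{C}$ is smooth and simply connected, $f$ lifts to a nonconstant holomorphic map $\widetilde{f}\colon\mathbb{C}\to\widetilde{C}$, so the compact Riemann surface $\widetilde{C}$ admits a nonconstant entire curve, which forces $g(\widetilde{C})\leq 1$. Applying Theorem~\ref{effective} to $\nu$ (legitimate because $C'\nsubseteq E$),
\[
(K_S-E)\cdot C'=\deg\nu^{*}(K_S-E)\leq 2\bigl(2g(\widetilde{C})-2\bigr)\leq 0 .
\]
Fix an ample line bundle $A$ on $S$. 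For every $m>0$, the line bundle $\bigl(m(K_S-E)-A\bigr)\big|_{C'}$ has negative degree, hence every section of $m(K_S-E)-A$ on $S$ vanishes identically on $C'$; therefore $C'\subset\Bs\bigl(m(K_S-E)-A\bigr)$ for all $m$, that is, $C'\subset\mathbb{B}^{+}(K_S-E)$. Consequently $f(\mathbb{C})\subset C'\subset E\cup\mathbb{B}^{+}(K_S-E)$, which is the assertion of the theorem.

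\emph{Main obstacle.} The genuinely new difficulty compared with Theorem~\ref{rational curves} — where the curve is algebraic from the outset — is the first step, i.e.\ ruling out Zariski-dense entire curves. The delicate point is the passage from the differential relation $f^{*}q\equiv 0$ to the algebraicity of $\overline{f(\mathbb{C})}$: this is immediate when both fibrations $S\to C_i/G$ are globally defined, since their leaves are then algebraic, but when $G$ exchanges the two factors of $C_1\times C_2$ only a degree-two web is defined on $S$, and one must either pass to an étale cover of $S$ where genuine fibrations exist and transfer the conclusion back, or appeal to McQuillan's theorem on entire leaves of foliations on surfaces of general type. Once algebraic degeneracy is in hand, the remaining step is formal: it only combines Theorem~\ref{effective} with the elementary description of the base locus, exactly as in Theorem~\ref{rational curves}; note in particular that only the sign of the right-hand side of Theorem~\ref{effective} (not the constant $2$) is used here, since for $g(\widetilde{C})\leq 1$ it is already $\leq 0$.
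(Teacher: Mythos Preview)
Your proof is correct and uses the same ingredients as the paper: symmetric differentials built from sections of $K_S-E$ (the map $\Gamma$ of Proposition~\ref{1}), the fundamental vanishing theorem for entire curves, and Theorem~\ref{effective} to force the resulting algebraic curve into $\mathbb{B}^+(K_S-E)$. The only difference is organizational: the paper separates the tangent case (landing in central components, then handled by Lemma~\ref{cc} and Proposition~\ref{elliptic}) from the non-tangent case (which in Proposition~\ref{weakentire} yields $f(\mathbb{C})\subset\mathbb{B}^+(K_S-E)$ directly, without first passing through algebraic degeneracy), whereas you merge both into a single step by applying Theorem~\ref{effective} to the normalization of $\overline{f(\mathbb{C})}$; your packaging is arguably cleaner. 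One clarification: the ``main obstacle'' you flag does not arise here, since by the paper's definition of product-quotient surface the group $G$ acts \emph{diagonally} on $C_1\times C_2$ and never exchanges the factors, so the two fibrations $\sigma_1,\sigma_2$ are globally defined on $S$ and the passage from $f^*q\equiv 0$ to tangency along one of them is immediate.
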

	
	\section{Preliminaries}
	In this section we are going to recall some definitions and results that will be used throughout this paper. 
	
	\subsection{Product-quotient surfaces}
	\begin{Def}
	A \emph{product-quotient surface} $S$ is the minimal resolution of the singularities of a quotient $X:=(C_1\times C_2)/G$, where $C_1$ and $C_2$ are two smooth projective of respective genera $g(C_1),g(C_2) \geq 2$, and $G$ is a finite group, acting faithfully on each of them and diagonally on the product. The surface $X:=(C_1\times C_2)/G$ is called \emph{the quotient model} of $S$ \cite{BP}.
\end{Def}

Let $S$ be a product-quotient surface. Let $\varphi:S\rightarrow X$ be the resolution morphism of the singularities of $X:=(C_1\times C_2)/G$, and let $p_1:X\rightarrow C_1/G$ and $p_2:X\rightarrow C_2/G$ be the two natural projections. Let us define $\sigma_1:S\rightarrow C_1/G$ and $\sigma_2:S\rightarrow C_2/G$ to be the compositions $p_1\circ \varphi$ and $p_2\circ \varphi$ respectively. Thus, we have the following commutative diagram encoding all this information:

\begin{center}
	\raisebox{-0.5\height}{\includegraphics[scale=1.25]{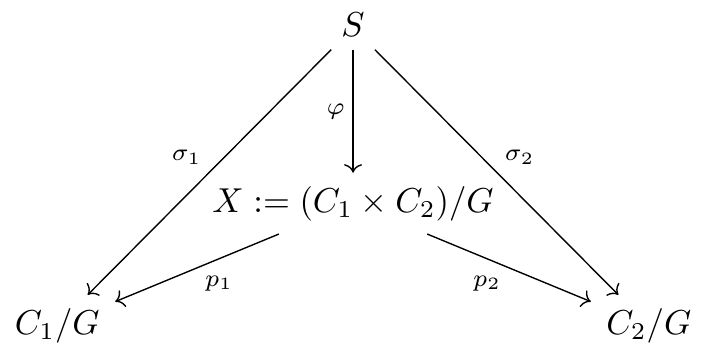}}
\end{center}

The surface $X:=(C_1\times C_2)/G$ has a finite number of singularities, since there are finitely many points on $C_1\times C_2$ with non trivial stabilizer. Moreover, since $G$ is finite, the stabilizers are cyclic groups (\cite{Farkas}, III $7.7$) and so, the singularities of $X$ are \emph{cyclic quotient singularities}. Thus, if $(x,y)\in C_1\times C_2$ with non trivial stabilizer $H_{(x,y)}$, then, around the singularity $\overline{(x,y)}\in X:=(C_1\times C_2)/G$, $X$ is analytically isomorphic to the quotient $\mathbb{C}^2/ \mathbb{Z}_n$, where $n=|H_{(x,y)}|$ and the action of the cyclic group $\mathbb{Z}_n = \langle \xi\rangle$ on $\mathbb{C}_2$ is defined by $\xi(z_1,z_2)=(\xi z_1,\xi^a z_2)$, being $n$ and $a$ coprime integers such that $1\leq a\leq n-1$ and $\xi$ is a primitive $n$-th root of unity. In this case, the cyclic quotient singularity is called  \emph{singularity of type} $\frac{1}{n}(1,a)$.\\
Note that singular points of type $\frac{1}{n}(1,a)$, are also of type $\frac{1}{n}(1,a')$ where $a'$ is the multiplicative inverse of $a$ in $\mathbb{Z}_n^*$; namely $a'$ is the unique integer $1\leq a'\leq n-1$ such that $aa'\equiv 1$ modulo $n$ (\cite{BPHV}, III).

The exceptional fiber of a cyclic quotient singularity of $X$ of type $\frac{1}{n}(1,a)$ on the the minimal resolution $S$, is a \emph{Hizerbruch-Jung string} (H-J string), that is to say, a connected union $L=\sum_{i=0}^lZ_i$ of smooth rationals curves $Z_1,\dots, Z_l$ with self-intersection numbers less or equal than $-2$, and ordered linearly so that $Z_iZ_{i+1}=1$ for all $i$ and $Z_iZ_j=0$ if $\vert i-j\vert\geq 2$ (\cite{BPHV}, III $5.4$), \cite{Fujiki}. Then, the exceptional divisor $E$ on the minimal resolution $S$ is the connected union of disjoint H-J strings each of them being the fiber of each singularity of $X:=(C_1\times C_2)/G$.

The self-intersection numbers $Z_i^2=-b_i$ are given by the formula 
\[
	\dfrac{n}{a}=b_1-\dfrac{1}{b_2-\dfrac{1}{\cdots-\dfrac{1}{b_l}}}
\]
Abusing slightly of the notation, we denote the right part of the formula by $[b_1,\cdots,b_l]$. 

Moreover, 
\[
	\dfrac{n}{a}=[b_1,\cdots, b_l] \text{ if and only if }\dfrac{n}{a'}=[b_l,\cdots, b_1]
\]
Cyclic quotient singularities of type $\frac{1}{n}(1,n-1)$ are particular cases of \emph{rational double points}: all the curves $Z_i$ have self-intersection equal to $-2$. Singularities of type $\frac{1}{2}(1,1)$ are called \emph{ordinary double points}. 
	
On the other hand, Serrano's paper (\cite{Serrano}, Proposition $2.2$) tells us that the irregularity of $S$, defined by $q(S):=h^1(S,\mathcal{O}_S)$, is given by the formula:
\[
	q(S)=g(C_1/G)+g(C_2/G).
\]
Now, if $S$ is of general type, then $q(S)\leq P_g(S)$. Therefore, we have that $S$ is a product-quotient surface of general type with $P_g=0$ if and only if $\chi(\mathcal{O}_S)=1$ and $C_1/G\cong C_2/G\cong \mathbb{P}^1$. Moreover, using the Noether's formula we see that the condition $P_g=0$ also implies that $c_1^2+c_2=12$. 

The classification of product-quotient surfaces with $P_g=0$ was started by I. Bauer and F. Catanese in \cite{BC}; they classified the surfaces $X=(C_1\times C_2)/G$ with $G$ being an abelian group acting freely and $P_g(X)=0$. Later in \cite{BCG}, both of them and F. Grunewald, extended this classification to the case of an arbitrary group $G$. Not long after, R. Pignatelli jointed them, and in \cite{BCGP} they dropped the assumption that $G$ acts freely on $C_1\times C_2$; they classified product-quotient surfaces with $P_g=0$ whose quotient model $X$ has at most canonical singularities. 

Finally in \cite{BP}, I. Bauer and R. Pignatelli dropped any restiction on the singularities of $X$ and gave a complete classification of product-quotient surfaces  $S$ of general type with $P_g=0$ and $K_S^2>0$. Moreover, they proved that there are exactly $73$ irreducible families of surfaces of this kind, and all but one of them, are in fact minimal surfaces; more precisely they proved the following result:

\begin{Th}[\cite{BP}, Theorem 0.3]\label{BP}
	\
	\begin{enumerate}
		\item Minimal product-quotient surfaces with $P_g=0$ form exactly $72$ irreducible families.
		\item There is exactly one product-quotient surface with $K_S^2>0$ which is non minimal. It has $K_S^2=1$, $\pi_1(S)=\mathbb{Z}_6$.
	\end{enumerate}
	
\end{Th}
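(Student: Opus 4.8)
The plan is to read Theorem~\ref{BP} as a classification statement in the Beauville--Catanese style, translating the geometry into finite group theory and reducing to a (large but finite) computer search; this is exactly what is carried out in \cite{BC,BCG,BCGP,BP}, and I would organize a proof the same way. Since $P_g(S)=0$ forces $\chi(\mathcal O_S)=1$ and $C_1/G\cong C_2/G\cong\mathbb P^1$, each $C_i$ is a Galois cover of $\mathbb P^1$ with group $G$, hence, by Riemann's existence theorem, is determined up to isomorphism by a set of branch points together with a \emph{spherical system of generators} of $G$: elements $g^{(i)}_1,\dots,g^{(i)}_{r_i}$ generating $G$ with $g^{(i)}_1\cdots g^{(i)}_{r_i}=1$ and prescribed orders $m^{(i)}_1,\dots,m^{(i)}_{r_i}$ (the signature of the cover). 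Riemann--Hurwitz then gives $2g(C_i)-2=|G|\bigl(-2+\sum_j(1-1/m^{(i)}_j)\bigr)$, and the stabilizers of the diagonal $G$-action on $C_1\times C_2$ --- hence the basket of cyclic quotient singularities $\tfrac1n(1,a)$ of $X=(C_1\times C_2)/G$ --- are computed purely from the two systems of generators.

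Next I would express every numerical invariant of $S$ in terms of this data. One has $K_X^2=\dfrac{8(g(C_1)-1)(g(C_2)-1)}{|G|}$ and $\chi(\mathcal O_S)=\chi(\mathcal O_X)=1$; resolving each singularity $\tfrac1n(1,a)$ via its Hirzebruch--Jung string $[b_1,\dots,b_l]$ contributes explicit corrections to $K^2$ and to the topological Euler number, so that
\[
K_S^2=8\,\frac{(g(C_1)-1)(g(C_2)-1)}{|G|}-\sum_{\text{basket}}k_{\frac1n(1,a)},
\]
and similarly for $e(S)$, $q(S)=g(C_1/G)+g(C_2/G)=0$ and $P_g(S)$. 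Imposing $P_g=q=0$ and $K_S^2>0$ turns everything into a system of Diophantine constraints linking $|G|$, the two signatures $(m^{(i)}_j)$, and the basket. The heart of the argument --- and the step I expect to be the main obstacle --- is to deduce from these constraints that only finitely many data sets occur: large signatures make $g(C_i)$ grow faster than $|G|$ and, once the negative basket contribution is controlled, eventually violate $\chi(\mathcal O_S)=1$ together with $K_S^2\ge 1$, which bounds the number and orders of the branch points; Riemann--Hurwitz then bounds $|G|$. This bookkeeping is delicate precisely because the basket can absorb a lot of $K^2$, so one must estimate both terms simultaneously.

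Once the finitely many admissible pairs of signatures and groups are pinned down, I would enumerate by computer (as in \cite{BP}, using \textsc{Magma}) all pairs of spherical systems of generators of the allowed signatures in each admissible $G$, discard those whose diagonal action does not have finite stabilizers or whose computed invariants are not $(P_g,q,K_S^2)=(0,0,K_S^2>0)$, and then group the surviving data into irreducible families by the usual criterion (Hurwitz moves plus automorphisms of $G$, which give deformation-equivalent surfaces). This produces the finite list; checking which members are minimal amounts to deciding whether the configuration of exceptional H--J strings produces a $(-1)$-curve, and the search isolates this to the single non-minimal surface with $K_S^2=1$ and $\pi_1(S)=\mathbb Z_6$ (contracting the $(-1)$-curve yields a numerical Godeaux surface), the remaining $72$ forming minimal families. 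I would conclude by pointing to \cite{BC,BCG,BCGP,BP} for the complete tables and the explicit \textsc{Magma} computations.
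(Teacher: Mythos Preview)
The paper does not prove this theorem at all: Theorem~\ref{BP} is stated with the citation ``\cite{BP}, Theorem 0.3'' and is invoked purely as background from Bauer--Pignatelli; no argument is given or even sketched in the present paper. So there is no proof here to compare your proposal against.

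That said, your outline is a faithful high-level summary of the strategy actually used in \cite{BP} (building on \cite{BC,BCG,BCGP}): encode each $C_i\to\mathbb P^1$ by a spherical system of generators of $G$, read off the basket of cyclic quotient singularities from the pair of systems, express $K_S^2$, $e(S)$, $\chi(\mathcal O_S)$ in terms of $|G|$, the signatures and the basket, derive finiteness of the admissible combinatorial data from the constraints $P_g=q=0$ and $K_S^2>0$, and then run a \textsc{Magma} enumeration modulo Hurwitz moves and $\mathrm{Aut}(G)$. One point to be careful about in your sketch: minimality is not decided merely by inspecting the Hirzebruch--Jung strings for $(-1)$-curves (the $Z_i$ in an H--J string all have $Z_i^2\le -2$ by definition); in the single non-minimal example the $(-1)$-curve arises elsewhere in the configuration of the resolution, and detecting it requires the more refined analysis carried out in \cite{BP}. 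Apart from that nuance, your plan matches the cited approach, but a self-contained proof is well beyond what the present paper attempts.
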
  

The irreducible families mentioned in the first part of this theorem, are listed in tables $1$ and $2$ in \cite{BP}.
	
	\subsection{Isotrivial fibrations}
	\begin{Def} 
	A \emph{fibration} is a morphism from a smooth projective surface onto a smooth curve, with connected fibers. A fibration is called \emph{isotrivial fibration}, if all its smooth fibers are mutually isomorphic. A surface is called \emph{isotrivial surface} if it admits an isotrivial fibration.
\end{Def}

A product-quotient surface $S$ is an example of an isotrivial surface: it admits two natural isotrivial fibrations $\sigma_1:S\rightarrow C_1/G$ and $\sigma_2:S\rightarrow C_2/G$ whose smooth fibers are all isomorphic to $C_2$ and $C_1$ respectively. 

\begin{Def}
	A smooth projective surface is called \emph{standard isotrivial surface} if it is isomorphic to a product-quotient surface.
\end{Def}

In Serrano's paper \cite{Serrano} it is proved that any isotrivial surface is birationally equivalent to a standard one, more precisely, if $\gamma:Z\rightarrow C$ is isotrivial, then there exist a quotient $(C_1\times C_2)/G$ where $C_1$ is isomorphic to the general fiber of $\gamma$ and $G$ is a finite group, acting faithfully on $C_1$ and $C_2$ and diagonally on the product; such that $Z$ is birational to $(C_1\times C_2)/G$, $C$ is isomorphic to $C_2/G$, and the following diagram commutes:

\begin{center}
	\raisebox{-0.5\height}{\includegraphics[scale=1.25]{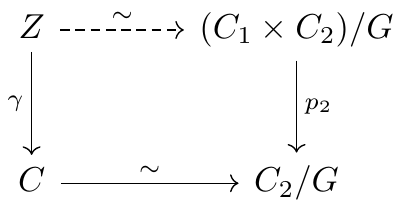}}
\end{center}

We also find in Serrano's paper a description of the singular fibers that can arise in a standard isotrivial surface, i.e., the possible singular fibers of its natural fibrations. Namely:

\begin{Th}[\cite{Serrano}, Theorem $2.1$]
	Let $S$ a standard isotrivial surface and let consider the fibration $\sigma_2:S\rightarrow C_2/G$. Let $y\in C_2$ and $H_y$ its stabilizer. If $F$ is the fiber of $\sigma_2$ over $\overline{y}\in C_2/G$, then:
	\begin{enumerate}
		\item The reduced structure of $F$ is the union of an irreducible smooth curve $Y$, called $\textit{the central component}$ of $F$, and either none or at least two mutually disjoint $\textit{H-J strings}$, each one meeting $Y$ at one point. These strings are in one-to-one correspondence with the branch points of $C_1\rightarrow C_1/H_y$.
		\item The central component $Y$ is isomorphic to $C_1/H_y$ and it has multiplicity equal to $\vert H_y\vert$ in $F$. The intersection of a string with $Y$ is transversal, and it takes place at only one of the end components of the string.
		\item If $L=\sum_{i=1}^{n}Z_i$ is a H-J string on $F$ and $Y'$ is the central component of the fiber of $\sigma_1:S\rightarrow C_1/G$ over $\sigma_1(L)$, then $L$ meets $Y'$ and $Y$ at opposite ends, i.e., either $Z_1Y=Z_nY'=1$ or $Z_nY=Z_1Y'=1$.
	\end{enumerate}
\end{Th}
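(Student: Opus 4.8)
The plan is to reduce the whole statement to the local structure of the cyclic quotient singularities of $X$, together with one global observation. First I would pin down the fibre of $p_2\colon X\to C_2/G$ over $\overline y$ as a divisor. Writing $q\colon C_1\times C_2\to X$ for the quotient and pulling $\overline y$ back first to $C_2$ — where $C_2\to C_2/G$ is ramified with index $|H_y|$ along the orbit $Gy$ — and then to $C_1\times C_2$, one gets $|H_y|\sum_{y'\in Gy}C_1\times\{y'\}$. On the other hand $q$ is étale in codimension one, $q$ maps $\coprod_{y'\in Gy}C_1\times\{y'\}$ onto $Y:=q(C_1\times\{y\})\cong C_1/H_y$, and $q^{*}Y=\sum_{y'}C_1\times\{y'\}$; comparing, $p_2^{*}(\overline y)=|H_y|\,Y$ on $X$, so $p_2^{-1}(\overline y)=Y$ set-theoretically. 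Since $\varphi$ is an isomorphism over the smooth locus of $X$, the reduced fibre $F_{\mathrm{red}}$ is then the union of the strict transform $\widetilde Y$ of $Y$ together with the exceptional string over each singular point of $X$ lying on $Y$; those singular points are the $\overline{(x,y)}$ with $\mathrm{Stab}_G(x)\cap H_y\neq 1$, i.e. — after quotienting by $H_y$ — exactly the branch points $q_1,\dots,q_r$ of the cyclic cover $C_1\to C_1/H_y$, and distinct ones give disjoint strings. Each such string is an H-J string by the fact recalled in the Preliminaries, and $\widetilde Y$, being the strict transform of $C_1/H_y$, is a smooth irreducible curve of multiplicity $|H_y|$ in $F$. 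This gives everything in (1) except the count, and everything in (2) except the position of the strings.

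Next I would rule out $r=1$. The cyclic cover $C_1\to C_1/H_y$ corresponds to a surjection $H_1\big((C_1/H_y)\setminus\{q_1,\dots,q_r\};\mathbb Z\big)\twoheadrightarrow\mathbb Z/|H_y|$ sending the class of a small loop around $q_i$ to the nonzero local monodromy $m_i$; since those loop classes sum to zero, $\sum_i m_i\equiv 0$, so $r=1$ would force $m_1\equiv 0$, a contradiction. Hence $r=0$ or $r\ge 2$, which completes part (1).

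The remaining assertions — transversality, ``end component'', and the ``opposite ends'' statement (3) — are local at a singular point $\overline{(x,y)}$ of $X$. Setting $H:=\mathrm{Stab}_G(x)\cap H_y=\langle h\rangle\cong\mathbb Z/n$, I would choose coordinates $z_1$ at $x\in C_1$ and $z_2$ at $y\in C_2$ with $h\cdot(z_1,z_2)=(\xi z_1,\xi^{a}z_2)$, $\xi$ a primitive $n$-th root of unity and $\gcd(a,n)=1$, so that $X$ is locally $\mathbb C^2/H$ of type $\tfrac1n(1,a)$. Since $H$ is the $H_y$-stabiliser of $x$ and the $H_x$-stabiliser of $y$ (with $H_x:=\mathrm{Stab}_G(x)$), $n$ divides $|H_y|$ and $|H_x|$, so $z_2^{|H_y|}$ and $z_1^{|H_x|}$ descend to $X$ and represent $\sigma_2$ and $\sigma_1$ locally; hence near $\overline{(x,y)}$ the central component $Y$ is the image of $\{z_2=0\}$ and the central component $Y'$ of the $\sigma_1$-fibre over $\sigma_1(L)=\overline x$ is the image of $\{z_1=0\}$. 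Finally I would invoke the toric description of the minimal resolution of $\tfrac1n(1,a)$: writing $n/a=[b_1,\dots,b_l]$, the exceptional string is $Z_1+\dots+Z_l$ with $Z_j^2=-b_j$, consecutive rays form a lattice basis, and the strict transforms of the two coordinate axes meet the string transversally in a single point each, lying on the two end components — $\{z_1=0\}$ on $Z_1$ and $\{z_2=0\}$ on $Z_l$ (this is exactly what the equivalence $n/a=[b_1,\dots,b_l]\iff n/a'=[b_l,\dots,b_1]$ records, applied to the interchange of the two factors). Translating back, $\widetilde Y$ meets $Z_l$ and $Y'$ meets $Z_1$, transversally and at one point, which is precisely (2) and the dichotomy in (3).

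I expect the main obstacle to be this last bookkeeping: the two central components are glued to the two ends of one and the same exceptional string, and getting the orientation right — reconciling the Hirzebruch–Jung expansions of $n/a$ and $n/a'$ under the swap of the factors — is exactly what forces ``opposite ends''. Everything else is a routine globalisation of the local toric picture, with the only genuinely global input being the divisor computation $p_2^{*}(\overline y)=|H_y|\,Y$ and the homological count ruling out a single branch point.
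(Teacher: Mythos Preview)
The paper does not prove this theorem; it is quoted from Serrano's paper \cite{Serrano} as background in the Preliminaries, with no argument supplied. So there is no ``paper's own proof'' to compare your proposal against.

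That said, your outline is a correct reconstruction of the standard argument. The divisor computation $p_2^{*}(\overline y)=|H_y|\,Y$ with $Y\cong C_1/H_y$, the identification of the singular points of $X$ on $Y$ with the branch points of $C_1\to C_1/H_y$, the exclusion of a single branch point via the abelianised surface relation in $\pi_1$, and the local toric picture of the Hirzebruch--Jung resolution (axes going to opposite end components, transversally) are exactly the ingredients Serrano uses. One small wording issue: when you write ``$H$ is the $H_y$-stabiliser of $x$ and the $H_x$-stabiliser of $y$'', you are implicitly using $H=\mathrm{Stab}_G(x,y)=H_x\cap H_y$; it is worth stating this explicitly, since the singularity type $\tfrac1n(1,a)$ is governed by this intersection, not by $H_x$ or $H_y$ alone. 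Otherwise the sketch is sound and matches the original.
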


If $F$ contains exactly $r$ H-J strings $L_1,\cdots,L_r$, where each $L_i$ is the resolution of a cyclic quotient singularity of type $\frac{1}{n_i}(1,a_i)$, then we know that the central component $Y$ satisfies that
\[
	Y^2=-\sum_{i=1}^r\dfrac{a_i}{n_i}
\] 
(\cite{Polizzi}, Proposition 2.8). All these properties hold for any fiber of $\sigma_1$.\\

Finally, Serrano's paper also provides an expression for the canonical bundle of a standard isotrivial surface in terms of the fibers of the two natural fibrations. Namely:

\begin{Th}[\cite{Serrano}, Theorem 4.1]
	Let $S$ be a standard isotrivial surface with associated fibrations $\sigma_1:S\rightarrow C_1/G$ and $\sigma_2:S\rightarrow C_2/G$. Let $\{n_iN_i\}_{i\in I}$ and $\{m_jM_j\}_{j\in J}$ denote the components of all singular fibers of $\sigma_1$ and $\sigma_2$ respectively, with their multiplicities attached. Finally, let $\{Z_t\}_{t\in T}$ be the set of curves contracted to points by $\sigma_1\times \sigma_2$, i.e, the exceptional locus on $S$. Then we have
	\[
		K_S=\sigma_1^*(K_{C_1/G})+\sigma_2^*(K_{C_2/G})+\sum_{i\in I}(n_i-1)N_i+\sum_{j\in J}(m_i-1)M_i+\sum_{t\in T}Z_t.
	\]
\end{Th}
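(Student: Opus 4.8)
The plan is to compute the canonical class of the quotient model $X=(C_1\times C_2)/G$ by descending it from the smooth product, and then to transport the resulting formula to $S$ through the resolution $\varphi\colon S\to X$, correcting by the discrepancies along the Hirzebruch--Jung strings. \emph{Step 1 (the canonical class of $X$).} On $C_1\times C_2$ one has $K_{C_1\times C_2}=\mathrm{pr}_1^*K_{C_1}+\mathrm{pr}_2^*K_{C_2}$, and Hurwitz's formula for the quotient maps $q_i\colon C_i\to C_i/G$ gives $K_{C_i}=q_i^*K_{C_i/G}+R_i$ with $R_i=\sum_P(e_P-1)P$. Writing $\pi\colon C_1\times C_2\to X$ for the quotient map and $\Gamma_{\bar P}\subset X$ for the reduced fibre of $p_1$ over a branch point $\bar P$ of $q_1$ (the image of $\{x\}\times C_2$ for any $x$ above $\bar P$), a local computation shows $\mathrm{pr}_1^*R_1=\pi^*\Delta_1$ where $\Delta_1:=\sum_{\bar P}(e_{\bar P}-1)\Gamma_{\bar P}$, and similarly $\mathrm{pr}_2^*R_2=\pi^*\Delta_2$. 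Since the points of $C_1\times C_2$ with non-trivial stabilizer are isolated, $\pi$ is étale in codimension one, so $\pi^*K_X=K_{C_1\times C_2}$ (as reflexive sheaves, hence as $\mathbb{Q}$-divisor classes, $X$ having only quotient singularities); combining these identities and using that $\pi$ is finite surjective onto the $\mathbb{Q}$-factorial $X$, so $\pi^*$ is injective on $\mathbb{Q}$-divisor classes, we obtain
\[
K_X=p_1^*K_{C_1/G}+p_2^*K_{C_2/G}+\Delta_1+\Delta_2.
\]

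\emph{Step 2 (pulling back to $S$).} Applying $\varphi^*$ one has $\varphi^*p_i^*K_{C_i/G}=\sigma_i^*K_{C_i/G}$, while $\varphi^*\Gamma_{\bar P}$ is the strict transform $\widetilde{\Gamma_{\bar P}}$ plus a $\mathbb{Q}$-combination of the exceptional curves meeting it. By the structure of the singular fibres (\cite{Serrano}, Theorem 2.1), $\widetilde{\Gamma_{\bar P}}$ is exactly the central component of the singular fibre of $\sigma_1$ over $\bar P$, of multiplicity $e_{\bar P}=n_i$ there, and likewise for $\sigma_2$. Writing the discrepancy formula $K_S=\varphi^*K_X+\sum_{t\in T}d_tZ_t$, every non-exceptional term of the resulting expression for $K_S$ matches the corresponding term of the right-hand side of the theorem --- in particular the central components come with coefficients $n_i-1$ and $m_j-1$ --- so the divisor
\[
D:=K_S-\sigma_1^*K_{C_1/G}-\sigma_2^*K_{C_2/G}-\sum_{i\in I}(n_i-1)N_i-\sum_{j\in J}(m_j-1)M_j-\sum_{t\in T}Z_t
\]
is supported on the exceptional locus, say $D=\sum_{t\in T}c_tZ_t$ with $c_t\in\mathbb{Z}$.

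\emph{Step 3 (vanishing of $D$) and the main obstacle.} Fix an exceptional curve $Z_s$ with $Z_s^2=-b_s$. By adjunction $K_S\cdot Z_s=b_s-2$; moreover $\sigma_i^*K_{C_i/G}\cdot Z_s=0$ because $Z_s$ is contracted by $\sigma_i$, and intersecting a whole fibre of $\sigma_1$ (resp.\ $\sigma_2$) with $Z_s$ gives $0$, which rewrites $\sum_{i\in I}(n_i-1)N_i\cdot Z_s$ (resp.\ $\sum_{j\in J}(m_j-1)M_j\cdot Z_s$) in terms of the \emph{reduced} singular fibres through $Z_s$. Feeding in the combinatorics of the Hirzebruch--Jung string containing $Z_s$ --- notably the fact (\cite{Serrano}, Theorem 2.1(3)) that the string meets the two central components at its opposite ends --- a short count yields $D\cdot Z_s=0$ for every $s\in T$; since the intersection form on the exceptional locus is negative definite, this forces $c_t=0$ for all $t$, i.e. $D=0$, which is the asserted formula. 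The delicate part, and the step I expect to be the main obstacle, is precisely this bookkeeping in Steps 2--3: one must control exactly how the central components and the H--J strings of the two fibrations fit together, which is the content of \cite{Serrano}, Theorem 2.1 and of the self-intersection identity $Y^2=-\sum_i a_i/n_i$ recalled above. A purely computational alternative to Step 3 is to evaluate the discrepancies $d_t$ directly from the continued fraction $n/a=[b_1,\dots,b_l]$ together with the multiplicities of the $Z_t$ in the two fibrations (governed by $n/a$ and $n/a'$), and to check that they combine to give coefficient exactly $1$ on each $Z_t$; this bypasses intersection theory at the cost of the explicit Hirzebruch--Jung recursion.
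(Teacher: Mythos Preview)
The paper does not prove this statement: it is quoted verbatim from \cite{Serrano}, Theorem~4.1, as a preliminary result in Section~2.2, with no argument given. So there is no ``paper's own proof'' to compare your proposal against.

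As for the proposal itself, the strategy is the natural one and is essentially how Serrano proceeds: compute $K_X$ on the quotient model via Hurwitz, pull back through the resolution, and identify the exceptional correction. Your Steps~1 and~2 are fine. Step~3, however, is not actually carried out: you assert that ``a short count yields $D\cdot Z_s=0$'' but do not perform the count, and you yourself flag this as the main obstacle. This is the whole content of the proof --- the bookkeeping of how the multiplicities $n_i$, $m_j$ of the exceptional curves in the two fibrations combine with the discrepancies $d_t$ to give exactly coefficient~$1$ on each $Z_t$. Concretely, for a string resolving a $\tfrac{1}{n}(1,a)$ point, the multiplicities in one fibration are governed by the convergents of $n/a=[b_1,\dots,b_l]$ and in the other by those of $n/a'=[b_l,\dots,b_1]$, and one must check the identity relating these to the discrepancies. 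Until that recursion (or the equivalent intersection computation) is written out, what you have is a correct plan rather than a proof.
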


The fibrations $\sigma_1:S\rightarrow C_1/G$ and $\sigma_2:S\rightarrow C_2/G$ can be thought as foliations $\mathcal{F}_1$ and $\mathcal{F}_2$ on $S$, such that Serrano's formula can be written as follows:	
\[
	K_S=\mathcal{N}^*_{\mathcal{F}_1}\otimes \mathcal{N}^*_{\mathcal{F}_2} \otimes \mathcal{O}_S(E)
\]
where $\mathcal{N}^*_{\mathcal{F}_1}$ and $\mathcal{N}^*_{\mathcal{F}_2}$ are the respective conormal line bundles, and $E$ is the exceptional divisor on $S$ (see \cite{Brunella}, p.30). 
	
	\section{Product-quotient surfaces with $P_g=0$ and $c_1^2=c_2$}
	In this section, we are going to study product-quotient surfaces of general type with geometric genus $P_g=0$ and $c_1^2-c_2=0$. Recall that $P_g=0$ implies $c_1^2+c_2=12$, then the last condition is equivalent to having $c_1^2=6$.

I. Bauer and R. Pignatelli give us information in this case. Namely, we know that this kind of surfaces form exactly $8$ irreducible families and we also have a complete description of their quotient models. In the following table we summarize some information that might be useful. 
\vspace{0.25cm}
\renewcommand{\arraystretch}{1.5}
\begin{center}
	\begin{tabular}{|c|c|c|c|c|c|c|}
		\hline
		$c_1^2(S)$ & Singularities of $X$ & $G$ & $|G|$ & $g(C_1)$ & $g(C_2)$ & \parbox{2cm}{\vspace{0.2cm}Number of irreducible families\vspace{0.2cm}}\\
		\hline
		\multirow{6}{*}{$6$}& \multirow{6}{*}{Two of type $\frac{1}{2}(1,1)$} & $\mathbb{Z}_2\times D_4$ & $16$ & $3$ & $7$ & $1$ \\ 
		\cline{3-7}
		&  & $\mathbb{Z}_2\times \mathfrak{S}_4$ & $48$ & $19$ & $3$ & $1$ \\ 
		\cline{3-7} 
		& & $\mathfrak{A}_5$ & $60$ & $4$ & $16$ & $1$ \\ 
		\cline{3-7} 
		&  & $\mathbb{Z}_2\times \mathfrak{S}_5$ & $240$ & $19$ & $11$ & $1$ \\ 
		\cline{3-7} 
		&  & $\text{PSL}(2,7)$ & $168$ & $19$ & $8$ & $2$ \\ 
		\cline{3-7}
		&  & $\mathfrak{A}_6$ & $360$ & $19$ & $16$ & $2$ \\ 
		\hline 
	\end{tabular}
\end{center}
\vspace{0.2cm}
For even more information see table 1 in \cite{BP}.\\

Let $S$ be a product-quotient surface and let us suppose that $S$ is of general type with $P_g(S)=0$ and $c_1^2=6$. We recall the following commutative diagram: 

\begin{center}
	\raisebox{-0.5\height}{\includegraphics[scale=1.25]{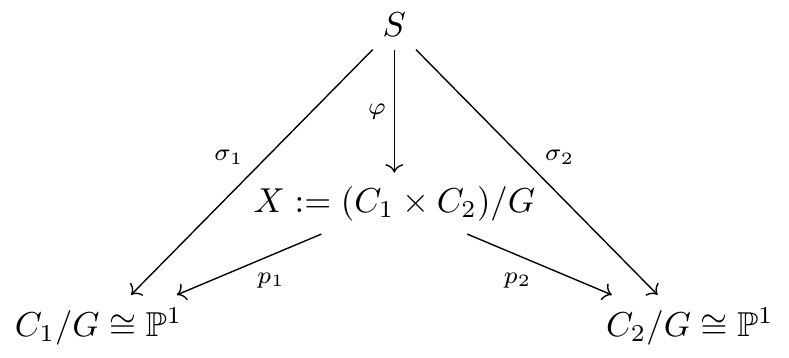}}
\end{center}

We know that the quotient model $X$ of $S$ has only two cyclic quotient singularities of type $\frac{1}{2}(1,1)$. Since these singularities are canonical, $K_S$ is nef and then $S$ is minimal. Theorem \ref{BP}, proved in (\cite{BP}, Theorem $0.3$), gives us another argument for the minimality of $S$.

Around one of the two singular points, $X$ is analytically isomorphic to the quotient $\mathbb{C}^2/ \mathbb{Z}_2$, where the cyclic group $\mathbb{Z}_2$ acts on $\mathbb{C}^2$ by $(z_1,z_2)\rightarrow(-z_1, -z_2)$. This quotient is an affine subvariety of $\mathbb{C}^3$, with coordinates $u=z_1^2, v=z_1z_2, w=z_2^2$, defined by the equation $uw=v^2$ (\cite{Reid}, Prop-Def 1.1, Example 1.2). Moreover, if $\mu_1,\mu_2$ are local coordinates on $S$, the resolution morphism $\varphi$ is locally given by 
\[
	\varphi(\mu_1,\mu_2)=(u=\mu_1,v=\mu_1\mu_2,w=\mu_1 \mu_2^2)
\] 
(\cite{Reid}, Example 3.1).  Therefore, we have the following relations between the local coordinates $z_1, z_2$ and $\mu_1, \mu_2$:
\begin{equation*}
	\left\lbrace
	\begin{array}{l}
		z_1 = \mu_1^{1/2}\\
		z_2 = \mu_1^{1/2}\mu_2
	\end{array}
	\right.
\end{equation*}

On the other hand, the exceptional fiber of a cyclic quotient singularity $\frac{1}{2}(1,1)$ on the minimal resolution $S$, is a H-J string formed by only one smooth rational curve with self-intersection number equal to $-2$. Using the local coordinates $\mu_1,\mu_2$ on $S$ we see that it is given by the set of points $(\mu_1,\mu_2)$ such that $\mu_1=0$.
	
We denote by $E$ the exceptional divisor on the minimal resolution $S$ of $X$. Since $X$ has only two cyclic quotient singularities of type $\frac{1}{2}(1,1)$, then $E$ is the disjoint union of two rational curves with self-intersection number equal to $-2$. Moreover, $E$ is locally defined by the equation $\mu_1=0$.

\begin{Not}
For the rest of this section (subsections 3.1, 3.2, 3.3), we will denote by $S$ a product-quotient surface of general type such that $P_g(S)=0$ and $c_1(S)^2=6$, by $X:=(C_1\times C_2)/G$ its quotient model and by $E$ the exceptional divisor.
\end{Not}
		
	\subsection{Bigness of the cotangent bundle}
	We denote by $\Lambda$ the set of points of $C_1\times C_2$ with non trivial stabilizer. Recall that $\Lambda$ is a finite set.

Let us first describe a natural way to produce sections of $\mathcal{S}^{2m}\Omega_X$, from sections of $K_S^{\otimes m}$ using the following diagram:

\begin{center}
	\raisebox{-0.5\height}{\includegraphics[scale=1.25]{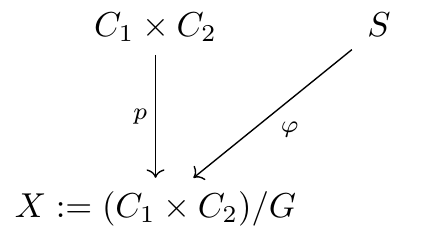}}
\end{center}

Let $\omega$ be a section of $\mathcal{K}_S^{{\otimes}m}$. The pushforward $\varphi_*$ of $\omega$ is a section of $\mathcal{K}_X^{{\otimes}m}$ defined on, that can be lifted by the pullback $p^*$ to a section of $(\mathcal{K}_{C_1\times C_2}^{{\otimes}m})^G$ defined outside of $\Lambda$. However, since $\text{codim}_{C_1\times C_2}\Lambda=2$, this section uniquely extends to a section defined on $C_1\times C_2$ . Moreover, the canonical isomorphism between $\mathcal{K}_{C_1\times C_2}$ and $\lambda_1^*\Omega_{C_1}\otimes \lambda_2^*\Omega_{C_2}$ where $\lambda_1:C_1\otimes C_2\rightarrow C_1$ and $\lambda_2:C_1\otimes C_2\rightarrow C_2$ are the projections, allows us to identify the sections of $(\mathcal{K}_{C_1\times C_2}^{{\otimes}m})^G$ with sections of $(\mathcal{S}^{2m}\Omega_{C_1\times C_2})^G$. Therefore, we get a section of $(\mathcal{S}^{2m}\Omega_{C_1\times C_2})^G$ which descend by $p_*$ to a section of $\mathcal{S}^{2m}\Omega_X$ defined on  the regular part of $X$. We denote this section by $\Theta(\omega)$. 

Let us denote by $\Gamma(\omega)$ the pullback of $\Theta(\omega)$ by $\varphi^*$. Note that $\Gamma(\omega)$ is, a priori, a section of $\mathcal{S}^{2m}\Omega_S$ defined outside of the exceptional divisor $E$.\\

If we start with global sections of $\mathcal{K}_S^{\otimes m}$, this process is summarized in the following commutative diagram:
\begin{center}
	\raisebox{-0.5\height}{\includegraphics[width=\textwidth]{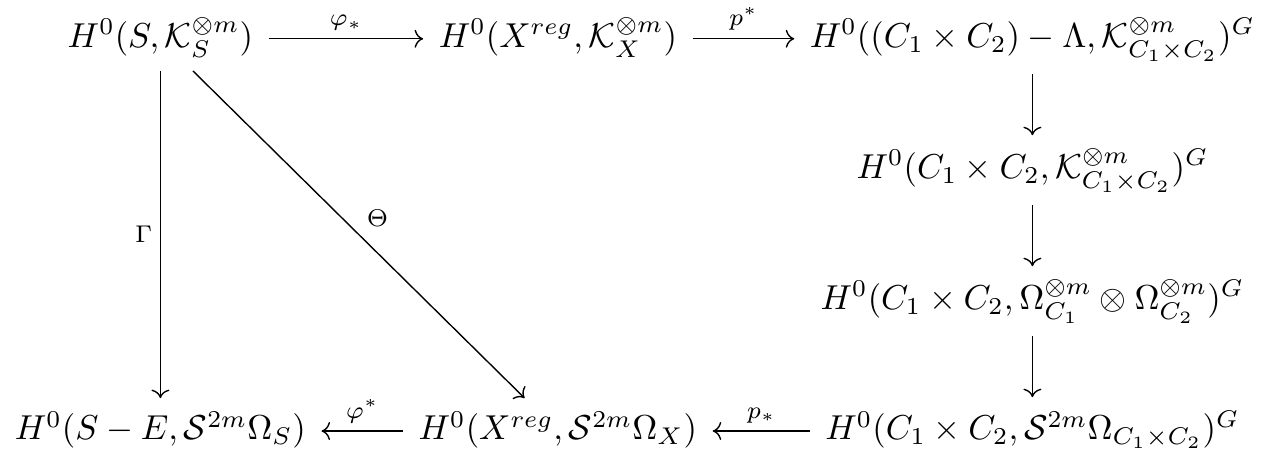}}
\end{center}

The following proposition ensures that taking global sections of $\mathcal{K}_S^{\otimes m}
$ vanishing along $E$, at least with multiplicity $m$, is a sufficient condition to obtain global sections of $\mathcal{S}^{2m}\Omega_S$.

\begin{Prop} \label{1}
	If $\omega$ is a global section of $\mathcal{O}(m(K_S-E))$, then $\Gamma(\omega)$ naturally extends to a well-defined global section of $\mathcal{S}^{2m}\Omega_S$.
\end{Prop}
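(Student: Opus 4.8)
The plan is to reduce the statement to a local computation in a neighbourhood of $E$. Away from $E$ the resolution $\varphi$ is an isomorphism onto an open subset of the regular locus of $X$, so $\Gamma(\omega)=\varphi^{*}\Theta(\omega)$ is automatically a holomorphic section of $\mathcal{S}^{2m}\Omega_{S}$ on $S\setminus E$; everything therefore comes down to showing that it acquires no poles along $E$. Since $E$ is a disjoint union of two $(-2)$-curves, I would work near one such curve, in the affine chart with coordinates $(\mu_{1},\mu_{2})$ in which $E=\{\mu_{1}=0\}$ and $\varphi(\mu_{1},\mu_{2})=(u,v,w)=(\mu_{1},\mu_{1}\mu_{2},\mu_{1}\mu_{2}^{2})$; there the local uniformizing coordinates on $C_{1}\times C_{2}$ are $z_{1}=\mu_{1}^{1/2}$ and $z_{2}=\mu_{1}^{1/2}\mu_{2}$. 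The remaining point of $E$ is covered by the analogous chart in which the roles of the two factors (equivalently, of $z_{1}$ and $z_{2}$) are exchanged, and it is handled by the symmetric argument.

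First I would record the differentials
\[
dz_{1}=\tfrac12\,\mu_{1}^{-1/2}\,d\mu_{1},\qquad
dz_{2}=\mu_{1}^{-1/2}\bigl(\tfrac12\,\mu_{2}\,d\mu_{1}+\mu_{1}\,d\mu_{2}\bigr),
\]
which in particular give $dz_{1}\wedge dz_{2}=\tfrac12\,d\mu_{1}\wedge d\mu_{2}$, reflecting the fact that the resolution of the $A_{1}$-singularity is crepant ($K_{S}=\varphi^{*}K_{X}$). Writing $\omega=h(\mu_{1},\mu_{2})\,(d\mu_{1}\wedge d\mu_{2})^{\otimes m}$ with $h$ holomorphic and tracing it through the construction of $\Theta$ — lift to $C_{1}\times C_{2}$, use $\mathcal{K}_{C_{1}\times C_{2}}\cong\lambda_{1}^{*}\Omega_{C_{1}}\otimes\lambda_{2}^{*}\Omega_{C_{2}}\hookrightarrow\mathcal{S}^{2}\Omega_{C_{1}\times C_{2}}$, descend — the tensor $\Theta(\omega)$ becomes, in the uniformizing chart, the $G$-invariant symmetric tensor $2^{m}h\cdot dz_{1}^{m}dz_{2}^{m}$ (the $m$-th powers and the product being taken in the symmetric algebra). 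Substituting the two differentials above and expanding by the binomial formula yields
\[
\varphi^{*}\!\bigl(dz_{1}^{m}dz_{2}^{m}\bigr)=2^{-m}\,\mu_{1}^{-m}\sum_{j=0}^{m}\binom{m}{j}\Bigl(\tfrac12\,\mu_{2}\Bigr)^{m-j}\mu_{1}^{\,j}\;d\mu_{1}^{\,2m-j}\,d\mu_{2}^{\,j},
\]
so that $\Gamma(\omega)=h(\mu_{1},\mu_{2})\,\mu_{1}^{-m}\sum_{j}\binom{m}{j}(\tfrac12\mu_{2})^{m-j}\mu_{1}^{j}\,d\mu_{1}^{2m-j}d\mu_{2}^{j}$ has a priori a pole of order at most $m$ along $E$, coming entirely from the term $j=0$.

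The key observation is then that $\omega\in H^{0}\!\bigl(S,\mathcal{O}(m(K_{S}-E))\bigr)$ means exactly that $h$ is divisible by $\mu_{1}^{m}$ in the local ring (since $E$ is smooth and reduced near each of its points, $\mathcal{O}(-mE)$ is locally generated by $\mu_{1}^{m}$). Writing $h=\mu_{1}^{m}\tilde{h}$ with $\tilde{h}$ holomorphic and substituting, the factor $\mu_{1}^{-m}$ is cancelled and
\[
\Gamma(\omega)=\tilde{h}(\mu_{1},\mu_{2})\sum_{j=0}^{m}\binom{m}{j}\Bigl(\tfrac12\,\mu_{2}\Bigr)^{m-j}\mu_{1}^{\,j}\;d\mu_{1}^{\,2m-j}\,d\mu_{2}^{\,j}
\]
is visibly a holomorphic section of $\mathcal{S}^{2m}\Omega_{S}$ on the whole chart. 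The same holds in the other chart, and since both local extensions coincide with $\Gamma(\omega)$ on the dense open set $S\setminus E$, they glue to a global section of $\mathcal{S}^{2m}\Omega_{S}$, as claimed.

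The step I expect to be the crux — and would be most careful about — is the bookkeeping of the powers of $\mu_{1}$ in the expansion: one must check that each occurrence of $d\mu_{2}$ in $dz_{2}^{m}$ carries a compensating factor $\mu_{1}$, so that the \emph{only} pole that survives is the one of order $m$ in the pure $d\mu_{1}$-direction. This is precisely what makes the multiplicity $m$, i.e. the twist by $-mE$, both necessary and sufficient here, and it explains why the divisor $E$ enters the conclusions of the main theorems.
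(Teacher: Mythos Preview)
Your argument is correct and follows essentially the same route as the paper: the same local chart, the same change of variables $z_{1}=\mu_{1}^{1/2}$, $z_{2}=\mu_{1}^{1/2}\mu_{2}$, and the same binomial expansion of $dz_{1}^{m}dz_{2}^{m}$, leading to a pole of order at most $m$ along $E$ that is killed precisely by the vanishing condition coming from the twist by $-mE$. Your write-up is in fact a bit more careful than the paper's in two respects: you compute $dz_{1}$ and $dz_{2}$ explicitly (making the crepancy $dz_{1}\wedge dz_{2}=\tfrac12\,d\mu_{1}\wedge d\mu_{2}$ visible), and you remember that the exceptional $(-2)$-curve needs a second affine chart to be covered, whereas the paper works only in one chart and leaves the other implicit.
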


\begin{proof}  
	Let $\omega\in H^0(S,\mathcal{O}(m(K_S-E)))$. Following the previous diagram, we get $\Theta(\omega)\in H^0(X^{reg},\mathcal{S}^{2m}\Omega_X)$. By definition, the corresponding section on $C_1\times C_2$ can be written locally, let us say around a fixed point, as 
	\[
		a(z_1,z_2){dz_1}^{m}{dz_2}^{m}.
	\]
	
	Using the change of coordinates $z_1=\mu_1^{1/2}$ and $z_2=\mu_1^{1/2}\mu_2$ given by $\varphi$ at singular points of $X$, we get that the pullback by $\varphi^*$ of $\Theta(\omega)$, which is nothing else than $\Gamma(\omega)$, can be written locally as 
	\[
		\sum_{j=0}^m\binom{m}{j}\frac{\mu_2^{m-j}(a\circ\varphi)(\mu_1,\mu_2)}{2^{2m-j}\mu_1^{m-j}}d\mu_1^{2m-j}d\mu_2^j
	\]
	and it naturally extends to a well-defined global section of $\mathcal{S}^{2m}\Omega_S$, since $a\circ\varphi$ vanishes along $E$ at least with multiplicity $m$.
\end{proof}

\begin{Prop} \label{2}
	The line bundle $\mathcal{O}(K_S-E)$ is big.
\end{Prop}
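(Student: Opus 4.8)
The plan is to reduce the bigness of $K_S-E$ to a numerical computation, exploiting that $S$ is a surface: for a divisor $D$ on a smooth projective surface, $D$ is big as soon as $h^0(S,\mathcal{O}(mD))$ grows quadratically in $m$, and this growth can be read off from Riemann--Roch once the relevant intersection numbers and the $h^2$-term are under control. So I would first collect the intersection numbers. Since $S$ is minimal of general type, its canonical divisor $K_S$ is nef, and by hypothesis $K_S^2=c_1(S)^2=6$. The exceptional divisor $E$ is the disjoint union of the two $(-2)$-curves $Z_1,Z_2$ resolving the two singularities of type $\tfrac{1}{2}(1,1)$ of $X$, so $E^2=Z_1^2+Z_2^2=-4$; and adjunction on each smooth rational curve $Z_i$ gives $K_S\cdot Z_i=-2-Z_i^2=0$, whence $K_S\cdot E=0$. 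Consequently
\[
	(K_S-E)^2 = K_S^2 - 2K_S\cdot E + E^2 = 2 > 0, \qquad (K_S-E)\cdot K_S = K_S^2 = 6 .
\]

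Next I would apply Riemann--Roch on $S$. Since $p_g(S)=q(S)=0$ we have $\chi(\mathcal{O}_S)=1$, so
\[
	\chi\bigl(m(K_S-E)\bigr) = \chi(\mathcal{O}_S) + \tfrac{1}{2}\, m(K_S-E)\cdot\bigl(m(K_S-E)-K_S\bigr) = m^2 - 3m + 1 .
\]
To turn this into a lower bound for $h^0$ I would dispose of $h^2$ via Serre duality: $h^2\bigl(m(K_S-E)\bigr)=h^0\bigl(K_S-m(K_S-E)\bigr)$. If this group were nonzero, then $K_S-m(K_S-E)$ would be linearly equivalent to an effective divisor, hence would meet the nef divisor $K_S$ nonnegatively; but $\bigl(K_S-m(K_S-E)\bigr)\cdot K_S = 6-6m < 0$ for $m\geq 2$, a contradiction. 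Therefore $h^0\bigl(m(K_S-E)\bigr)\geq\chi\bigl(m(K_S-E)\bigr)=m^2-3m+1$ for all $m\geq 2$, which grows quadratically, so $K_S-E$ is big (with volume at least $(K_S-E)^2=2$).

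I do not expect a genuine obstacle here: the whole argument is bookkeeping once two geometric inputs are in hand, namely the minimality of $S$ (equivalently, the nefness of $K_S$) and the explicit description of $E$ as two disjoint $(-2)$-curves orthogonal to $K_S$, both of which are already established above. The only point that deserves a little care is the sign computation $\bigl(K_S-m(K_S-E)\bigr)\cdot K_S < 0$ used to kill $h^2$. As an alternative one could observe that the rational-double-point model $X$ has $K_X$ ample --- it is nef, $K_X^2=6>0$, and meets every curve of $X$ positively because the only $K_S$-trivial curves on $S$ are the contracted $(-2)$-curves --- and then deduce bigness of $K_S-E=\varphi^*K_X-E$ from a Nakai--Moishezon type estimate; but the Riemann--Roch route above is the most economical.
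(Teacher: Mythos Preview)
Your proof is correct and takes a different, more numerical route than the paper. The paper does not compute the intersection numbers of $K_S-E$; instead it starts from the asymptotic estimate $h^0(S,mK_S)=3m^2+O(m)$ (coming from $K_S$ nef and $K_S^2=6$) and then bounds from above the number of linear conditions a section of $mK_S$ must satisfy in order to vanish to order $m$ along $E$, using the explicit local coordinates $z_1=\mu_1^{1/2}$, $z_2=\mu_1^{1/2}\mu_2$ of the $\tfrac{1}{2}(1,1)$-resolution and the $\mathbb{Z}_2$-invariance to halve the naive count of $2(1+2+\cdots+2m)$ conditions. This yields $h^0(m(K_S-E))\geq m^2-(M+1)m$ for large $m$, hence bigness. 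Your approach via Riemann--Roch for $m(K_S-E)$ together with Serre duality (killing $h^2$ by intersecting the Serre dual against the nef divisor $K_S$) bypasses the local condition-count entirely and even produces the exact value $\chi(m(K_S-E))=m^2-3m+1$. The advantage of the paper's method is that the same local computation feeds directly into the preceding proposition, where the explicit symmetric differentials $\Gamma(\omega)\in H^0(S,\mathcal{S}^{2m}\Omega_S)$ are built from sections of $m(K_S-E)$; your argument is cleaner as a stand-alone bigness proof and would apply verbatim to any minimal surface of general type carrying a configuration $E$ of $(-2)$-curves with $K_S^2+E^2>0$.
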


\begin{proof}
	Since $S$ is a minimal surface of general type, then the canonical divisor $K_S$ is nef. Therefore, by the asymptotic Riemann-Roch theorem, we have that 
	\[
		h^0(S,\mathcal{K}_S^{\otimes m})=\frac{m^2c_1(S)^2}{2}+O(m),
	\]
	but $c_1(S)^2=6$, so
	\[
		h^0(S,\mathcal{K}_S^{\otimes m})=3m^2+O(m).
	\]
	Thus, there exists a positive real number $M$ such that 
	\[
		3m^2-Mm \leq h^0(S,\mathcal{K}_S^{\otimes m})
	\]
	for $m$ large enough.
	
	On the other hand, let $\omega$ be a section of $\mathcal{K}_S^{\otimes m}$.  The corresponding section on $C_1\times C_2$ can be written locally, around a fixed point, as 
	\[
		a(z_1,z_2)({dz_1}\wedge{dz_2})^{m}
	\]
	where $a$ is a holomorphic function defined as
	\[
		a(z_1,z_2)=\sum_{i,j}a_{ij}z_1^iz_2^j
	\]
	
	Using the change of coordinates $z_1=\mu_1^{1/2}$ and $z_2=\mu_1^{1/2}\mu_2$ given by $\varphi$ at singular points of $X$, we see that $\omega$ vanishes along $E$, at least with multiplicity $m$ if $a_{i,j}=0$, for every $i,j$ such that $i+j < 2m$, and this gives us, $1+2+\cdots +2m$ sufficient conditions. However, the section is invariant by the action of $G$, then $a_{ij}=0$ for all $i,j$ such that $i+j$ is odd since around a singular point, $a(z_1,z_2)$ is invariant by the action of its stabilizer $H\simeq \mathbb{Z}_2$. Therefore we just need to consider half of the conditions. Finally, since these conditions are given around one singular point, we just need to multiply by the number of singularities.
		
	Thus,
	\begin{align*}
		h^0(S,\mathcal{O}(m(K_S-E)))& \geq  h^0(S,\mathcal{K}_S^{\otimes m})- \frac{2(1+2+\cdots +2m)}{2}\\
		& \geq (3m^2-Mm)-(2m^2+m)\\
		&= m^2-(M+1)m
	\end{align*}
	for $m$ large enough.\\
	But for any $0<C<1$, we have that $m^2-(M+1)m\geq Cm^2$ for $m$ large enough.
	
	Therefore,
	\[
		h^0(S,\mathcal{O}(m(K_S-E)))\geq Cm^2
	\]
	for $m$ large enough, which means that $\mathcal{O}(K_S-E)$ is big (\cite{L}, Lemma $2.2.3$).
\end{proof}

\begin{Prop} \label{3}
	The cotangent bundle $\Omega_S$ is big.
\end{Prop}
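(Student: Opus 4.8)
The plan is to derive the bigness of $\Omega_S$ from the bigness of $K_S-E$ (Proposition \ref{2}) together with the symmetric tensors produced in Proposition \ref{1}, via the usual reduction of bigness of a vector bundle to bigness of a line bundle on its projectivization. Set $\xi:=\mathcal{O}_{\mathbb{P}(\Omega_S)}(1)$ on the threefold $\mathbb{P}(\Omega_S)$, so that $\Omega_S$ is big iff $\xi$ is big and $H^0(S,\mathcal{S}^N\Omega_S\otimes\mathcal{L})=H^0(\mathbb{P}(\Omega_S),\xi^{\otimes N}\otimes\pi^*\mathcal{L})$ for every line bundle $\mathcal{L}$ on $S$, where $\pi:\mathbb{P}(\Omega_S)\to S$. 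Fix an ample line bundle $A$ on $S$. Since $\xi$ is relatively ample for $\pi$, the line bundle $\xi\otimes\pi^*A^{\otimes k}$ is ample on $\mathbb{P}(\Omega_S)$ for $k\gg 0$; fix such a $k$. From the identity $\xi^{\otimes(N+1)}=(\xi\otimes\pi^*A^{\otimes k})\otimes(\xi^{\otimes N}\otimes\pi^*A^{-k})$ and the fact that an ample divisor plus an effective divisor is big, it suffices to exhibit one integer $N\geq 1$ with $H^0(S,\mathcal{S}^N\Omega_S\otimes A^{-k})\neq 0$, i.e. a nonzero symmetric differential twisted by an anti-ample line bundle.

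To produce such a section I would combine Propositions \ref{1} and \ref{2}. By Proposition \ref{2}, $K_S-E$ is big, so there are $m\geq 1$ and a nonzero $\omega\in H^0(S,\mathcal{O}(m(K_S-E))\otimes A^{-k})$. The construction $\omega\mapsto\Gamma(\omega)$ of Proposition \ref{1} is, concretely, multiplication by a fixed tensor: if $\omega_{\mathcal{F}_1},\omega_{\mathcal{F}_2}$ are the twisted $1$-forms defining the two foliations $\mathcal{F}_1,\mathcal{F}_2$ attached to $\sigma_1,\sigma_2$, then their symmetric product $\tau:=\omega_{\mathcal{F}_1}\otimes\omega_{\mathcal{F}_2}$ lies in $H^0(S,\mathcal{S}^2\Omega_S\otimes\mathcal{O}(E-K_S))$, using Brunella's formula $\mathcal{N}^*_{\mathcal{F}_1}\otimes \mathcal{N}^*_{\mathcal{F}_2}=\mathcal{O}(K_S-E)$ recalled above, and in the local coordinates of Proposition \ref{1} one reads off $\Gamma(\omega)=\omega\otimes\tau^{\otimes m}$ (this is exactly the factorization $\Gamma(\omega)=(a\circ\varphi)\cdot(\cdots)$ appearing there, with $a\leftrightarrow\omega$ and $(\cdots)\leftrightarrow\tau^{\otimes m}$). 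Hence, with $\omega$ now anti-ample-twisted, $\Gamma(\omega)=\omega\otimes\tau^{\otimes m}$ is a nonzero element of
\[
H^0\!\big(S,\ \mathcal{S}^{2m}\Omega_S\otimes\mathcal{O}(-m(K_S-E))\otimes\mathcal{O}(m(K_S-E))\otimes A^{-k}\big)=H^0\!\big(S,\ \mathcal{S}^{2m}\Omega_S\otimes A^{-k}\big),
\]
which provides the required $N=2m$ and finishes the argument.

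Geometrically this amounts to the relation $2\xi=D_1+D_2+\pi^*(K_S-E)$ on $\mathbb{P}(\Omega_S)$, where $D_i$ is the section of $\pi$ cut out by $\omega_{\mathcal{F}_i}$: thus $2\xi$ is effective plus the pullback of a big divisor, and adding $\xi\otimes\pi^*A^{\otimes k}$ turns the sum into ample-plus-effective. The step demanding the most care is the identification $\Gamma(\omega)=\omega\otimes\tau^{\otimes m}$ together with the correct bookkeeping of the twist $\mathcal{O}(-m(K_S-E))$ carried by $\tau^{\otimes m}$ — equivalently, the normal-bundle computation $\mathcal{O}_{\mathbb{P}(\Omega_S)}(D_i)=\xi\otimes\pi^*\mathcal{N}^*_{\mathcal{F}_i}$ fed into Brunella's formula; once this is in place, the bigness of $\Omega_S$ is immediate from Proposition \ref{2}. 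The remaining ingredients — the reduction to $\xi$, the ampleness of $\xi\otimes\pi^*A^{\otimes k}$ for $k\gg 0$, and ``ample + effective $\Rightarrow$ big'' — are standard.
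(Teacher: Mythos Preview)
Your argument is correct and follows essentially the same route as the paper: reduce to bigness of $\mathcal{O}(1)$ on the projectivized (co)tangent bundle, use Proposition~\ref{2} to obtain a nonzero section in $H^0(S,\mathcal{O}(m(K_S-E))\otimes A^{-k})$, feed it through the construction of Proposition~\ref{1} to land in $H^0(S,\mathcal{S}^{2m}\Omega_S\otimes A^{-k})$, and combine with the relative ampleness of $\mathcal{O}(1)$ to write a power of $\mathcal{O}(1)$ as ample plus effective. Your explicit identification $\Gamma(\omega)=\omega\otimes\tau^{\otimes m}$ with $\tau=\omega_{\mathcal{F}_1}\omega_{\mathcal{F}_2}\in H^0(S,\mathcal{S}^2\Omega_S\otimes\mathcal{O}(E-K_S))$, and the equivalent divisorial relation $2\xi=D_1+D_2+\pi^*(K_S-E)$, are a nice clarification of the ``and hence'' step in the paper (which tacitly applies Proposition~\ref{1} in the $A^{-1}$-twisted setting), but they do not change the strategy.
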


\begin{proof}
	For the sake of simplicity, we are going to use the same notation to refer to a divisor and its associated line bundle, except when we explicitly denote it.
	
	In order to prove that $\Omega_S$ is big, we show that the line bundle $\mathcal{O}_{\mathbb{P}(T_S)}(1)$ is big, which is equivalent to see that $\mathcal{O}_{\mathbb{P}(T_S)}(k)$, as a divisor, is the sum of an ample divisor and an effective divisor for a $k$ large enough (\cite{L}, Corollary $2.2.7$).
	
	Proposition \ref{2} tells us that $\mathcal{O}(K_S-E)$ is big, then there exists an ample line bundle $A$ and a positive integer $m$ such that 
	\[
		H^0(S,\mathcal{O}(m(K_S-E))\otimes A^{-1})\neq 0
	\] 
	and hence
	\[
		H^0(S,\mathcal{S}^{2m}\Omega_S\otimes A^{-1}) \neq 0.
	\]	
	However, $\mathcal{S}^{2m}\Omega_S \simeq \pi_*\mathcal{O}_{\mathbb{P}(T_S)}(2m)$ where $\pi:\mathbb{P}(T_S)\rightarrow S$ is the projective bundle associated to the tangent bundle $T_S$, and so we obtain that 
	\[
		H^0(\mathbb{P}(\Omega_S),\mathcal{O}_{\mathbb{P}(T_S)}(2m)\otimes \pi^{*}A^{-1}) \neq 0.
	\]
	But thinking of $\mathcal{O}_{\mathbb{P}(T_S)}(1)$ and $\pi^*A$ as divisors on $\mathbb{P}(T_S)$ and $S$ respectively, this last expression means that $\mathcal{O}_{\mathbb{P}(T_S)}(2m)- \pi^{*}A$ is an effective divisor.\\
	
	On the other hand, using an ampleness property on the projective bundle (\cite{L}, Proposition 1.2.7), we know that there exists a large enough positive integer $l$ such that $\mathcal{O}_{\mathbb{P}(T_S)}(1)+ l\pi^{*}A$ is an ample divisor on $\mathbb{P}(T_S)$.\\
	
	Finally, taking $k=2ml+1$ we get 
	\[
		\mathcal{O}_{\mathbb{P}(T_S)}(2ml+1) = \underbrace{ l(\mathcal{O}_{\mathbb{P}(T_S)}(2m)- \pi^{*}A)}_{\text{effective divisor}}+\underbrace{\mathcal{O}_{\mathbb{P}(T_S)}(1)+ l\pi^{*}A}_{\text{ample divisor}}
	\]
	and so $\mathcal{O}_{\mathbb{P}(T_S)}(1)$ is big.
\end{proof}
	
	\subsection{Rational curves}
	We already know that $\Omega_S$ is big and then, by Bogomolov's argument, there is only a finite number of rational curves on $S$; now we want to get more constraints. Recall that $S$ admits two natural isotrivial fibrations $\sigma_1:S\rightarrow C_1/G$ and $\sigma_2:S\rightarrow C_2/G$ and that they can be thought as foliations $\mathcal{F}_1$ and $\mathcal{F}_2$ on $S$.

\begin{Lemma}\label{cc}
	The central component $Y$ of any singular fiber on $S$ is not rational. 
\end{Lemma}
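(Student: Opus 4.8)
The plan is to identify the central component with a quotient of $C_1$ (or $C_2$) and then apply Riemann--Hurwitz; the one nontrivial ingredient is an a priori upper bound on the number of branch points of that quotient map, which will come from the very special singularity structure of $X$.

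First I would fix a singular fiber $F$ of $\sigma_2$, say over $\overline y\in C_2/G$, with stabilizer $H_y\subset G$; since $F$ is singular, $H_y\neq\{1\}$. By (\cite{Serrano}, Theorem~$2.1$), the central component $Y$ of $F$ is an irreducible smooth curve isomorphic to $C_1/H_y$, and the H-J strings contained in $F$ are in bijection with the branch points of the Galois cover $\pi\colon C_1\to C_1/H_y=Y$, whose degree is $d:=|H_y|\geq 2$. Singular fibers of $\sigma_1$ are treated in exactly the same way after exchanging the roles of $C_1$ and $C_2$, so it is enough to deal with $\sigma_2$.

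The key step is the bound $s\leq 2$, where $s$ denotes the number of branch points of $\pi$, equivalently the number of H-J strings contained in $F$. Every H-J string of $S$ is supported on the exceptional divisor $E$ of $\varphi$, and conversely each component of $E$, being contracted by $\sigma_2=p_2\circ\varphi$, is contained in a fiber of $\sigma_2$; thus the H-J strings occurring in all fibers of $\sigma_2$ together exhaust $E$. Under our hypotheses $X$ has exactly two singular points, both of type $\tfrac12(1,1)$, so $E$ is the disjoint union of precisely two $(-2)$-curves. Hence there are altogether two H-J string components among all fibers of $\sigma_2$, so $F$ contains at most two of them, i.e. $s\leq 2$. (Serrano's alternative ``none or at least two'' even forces $s\in\{0,2\}$, but only the inequality $s\leq 2$ is needed below.)

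Finally I would conclude by Riemann--Hurwitz. Suppose, for a contradiction, that $Y$ is rational, i.e. $g(C_1/H_y)=0$. Writing $e_1,\dots,e_s\geq 2$ for the ramification indices of $\pi$ over its $s\leq 2$ branch points,
\[
	2g(C_1)-2 \;=\; d\Bigl(-2+\sum_{k=1}^{s}\bigl(1-\tfrac{1}{e_k}\bigr)\Bigr) \;<\; d\,(-2+2) \;=\; 0,
\]
since each summand is $<1$ and $s\leq 2$. This contradicts $g(C_1)\geq 2$, so $g(Y)\geq 1$ and $Y$ is not rational. I expect the main (and essentially the only) delicate point to be the middle step: one must make sure that the H-J strings appearing in the fibers of $\sigma_2$ are exactly the two $(-2)$-curves resolving the two nodes of $X$, and it is precisely here that the hypothesis $c_1^2=6$ enters, through the classification recalled at the beginning of this section (see \cite{BP}). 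The Riemann--Hurwitz computation itself is completely routine.
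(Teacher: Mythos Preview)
Your proof is correct, but it follows a different route from the paper's. The paper argues via adjunction on $S$: it computes $Y^2$ directly (either $Y^2=0$ when $F$ contains no H-J string, or $Y^2=-\sum a_i/n_i=-1$ when $F$ contains the two $\tfrac12(1,1)$-strings), and then uses $2g(Y)-2=(K_S+Y)\cdot Y=K_S\cdot Y+Y^2$ together with the nefness of $K_S$ (plus the parity of $K_S\cdot Y+Y^2$) to force $g(Y)\ge 1$. Your argument instead lifts everything to the cover $C_1\to Y=C_1/H_y$ and applies Riemann--Hurwitz, using only the bound $s\le 2$ on the number of branch points and the hypothesis $g(C_1)\ge 2$; in particular you never invoke nefness of $K_S$ or compute $Y^2$. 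Both approaches rely on the same special input, namely that $E$ consists of exactly two $(-2)$-curves. Your method is closer in spirit to the later Example~\ref{ex}, while the paper's adjunction computation has the side benefit of giving $Y^2$ explicitly, which feeds into Proposition~\ref{degree}.
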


\begin{proof}
	Recall that the only singularities of $X$ are two ordinary double points, i.e., two cyclic quotient singularities of type $\frac{1}{2}(1,1)$. Then, the singular fibers on $S$ are the union of a central component $Y$ and either none or exactly two mutually disjoint rational curves (the exceptional divisor $E$) which correspond to the resolution of the two singularities.\\
	In the first case we have that $Y^2=0$. In the second case, we use the formula given at the end of the section $2.2$ and we obtain that $Y^2=-1$. On the other hand, we have that 
	\[
		2g(Y)-2=K_Y.Y=(K_S+Y).Y=K_S.Y+Y^2
	\]
	where $K_S.Y\geq 0$ since $K_S$ is nef. Therefore, we obtain in both cases $g(Y)\geq 1$, which means that $Y$ is not rational.	
\end{proof}

\begin{Prop}\label{rational}
	Let $f:\mathbb{P}^1\rightarrow S$ be a non constant holomorphic map. Then 
	\[
		f(\mathbb{P}^1)\subset E\cup \mathbb{B}(K_S-E)
	\]
	where $\mathbb{B}(K_S-E):=\displaystyle\bigcap_{m>0}\Bs(m(K_S-E))$ is the stable locus base of $K_S-E$.
\end{Prop}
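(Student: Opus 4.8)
The strategy is to argue by contradiction, using the symmetric differentials constructed in Proposition \ref{1}. Suppose $f(\mathbb{P}^1)\not\subset E$ but $f(\mathbb{P}^1)\not\subset\mathbb{B}(K_S-E)$. Then there is an integer $m>0$ and a section $\omega\in H^0\big(S,\mathcal{O}(m(K_S-E))\big)$ whose zero locus does not contain $f(\mathbb{P}^1)$; equivalently, $f^*\omega\neq 0$ in $H^0(\mathbb{P}^1,f^*\mathcal{O}(m(K_S-E)))$. By Proposition \ref{1}, $\Gamma(\omega)$ extends to a global section of $\mathcal{S}^{2m}\Omega_S$. Pulling it back along $f$ produces a section of $\mathcal{S}^{2m}\Omega_{\mathbb{P}^1}\cong\mathcal{O}_{\mathbb{P}^1}(-4m)$, which for $m\geq 1$ has no nonzero global section; hence $f^*\Gamma(\omega)=0$.

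The next step is to read off geometric information from this vanishing, using the product structure of $\Gamma(\omega)$. On the complement of $E$ and of the singular fibres of $\sigma_1$ and $\sigma_2$, one can choose local coordinates $(z_1,z_2)$ on $S$ adapted to the two fibrations (so that $z_i$ is pulled back from $C_i/G$ along $\sigma_i$); in such coordinates $\omega$ reads $a(z_1,z_2)(dz_1\wedge dz_2)^{m}$ for a local holomorphic function $a$, and by the very construction of $\Theta$ and $\Gamma$ one has $\Gamma(\omega)=a(z_1,z_2)\,(dz_1)^{m}(dz_2)^{m}$ as a symmetric tensor — globally this is just Serrano's identity $K_S-E=\mathcal{N}^*_{\mathcal{F}_1}\otimes\mathcal{N}^*_{\mathcal{F}_2}$ read through the inclusion $m(K_S-E)\hookrightarrow\mathcal{S}^{2m}\Omega_S$. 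Writing $f=(f_1,f_2)$ in such coordinates on a disc $U\subset\mathbb{P}^1$ whose image lies in this open set, we obtain
\[
	0=f^*\Gamma(\omega)=(a\circ f)\cdot(f_1')^{m}(f_2')^{m}\,(dt)^{2m}\quad\text{on }U,
\]
where $t$ is a coordinate on $U$. Since $a\circ f$, $f_1'$ and $f_2'$ are holomorphic on the connected disc $U$ and their product vanishes identically, one of the three factors vanishes identically on $U$ (conceptually: either $f$ is tangent to $\mathcal{F}_1$, or tangent to $\mathcal{F}_2$, or its image lies in the zero locus of $\omega$).

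If $a\circ f\equiv 0$ on $U$, then $\omega$ vanishes on the nonempty open subset $f(U)$ of the irreducible curve $f(\mathbb{P}^1)$, hence on all of $f(\mathbb{P}^1)$, contradicting $f^*\omega\neq 0$. Therefore $f_i'\equiv 0$ on $U$ for $i=1$ or $i=2$, i.e.\ $\sigma_i\circ f$ has vanishing derivative on $U$; being a holomorphic map $\mathbb{P}^1\to C_i/G\cong\mathbb{P}^1$, it is then constant, so $f(\mathbb{P}^1)$ is contained in a single fibre $F$ of $\sigma_i$. But, by Serrano's description of the fibres, the irreducible components of $F$ are its central component — which is isomorphic to $C_1$ or $C_2$ of genus $\geq 2$ when $F$ is smooth, and is never rational when $F$ is singular by Lemma \ref{cc} — together with the $(-2)$-curves of $E$ occurring in $F$. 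Since $f$ is non-constant, $f(\mathbb{P}^1)$ is an irreducible rational curve, hence coincides with one of these components; it cannot be the central component, and it is not contained in $E$ by hypothesis — a contradiction. This proves $f(\mathbb{P}^1)\subset\mathbb{B}(K_S-E)$, and together with the trivial alternative $f(\mathbb{P}^1)\subset E$ it gives the statement.

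The only mildly delicate points, which I would spell out rather than leave implicit, are that the clean local expression $\Gamma(\omega)=a\,(dz_1)^{m}(dz_2)^{m}$ is valid only off $E$ and the singular fibres — so the factorization dichotomy is obtained at a generic point of $f(\mathbb{P}^1)$ and then propagated to all of $f(\mathbb{P}^1)$ by irreducibility — and that pulling back by $f$ commutes with the relevant projection $f^*\Omega_S\to\Omega_{\mathbb{P}^1}$ and with the product decomposition of $\Gamma(\omega)$. I do not anticipate any substantive obstacle: the analytic content has already been packaged into Proposition \ref{1} (existence of the tensors and their vanishing behaviour along $E$) and into Lemma \ref{cc} (non-rationality of the central components).
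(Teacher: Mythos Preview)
Your proof is correct and follows essentially the same approach as the paper: the paper first splits into the cases ``$f$ tangent to some $\mathcal{F}_i$'' versus ``$f$ transverse to both'' and in the second case shows that every $\omega\in H^0(S,\mathcal{O}(m(K_S-E)))$ vanishes on $f(\mathbb{P}^1)$ via the same local identity $(a\circ f)(f_1')^m(f_2')^m=0$, whereas you fold this dichotomy into a single contradiction argument starting from one $\omega$ not vanishing on $f(\mathbb{P}^1)$. The ingredients --- Proposition~\ref{1}, the vanishing of symmetric differentials on $\mathbb{P}^1$, the local product form of $\Gamma(\omega)$, and Lemma~\ref{cc} for the tangential case --- are identical.
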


\begin{proof}
	Let $f:\mathbb{P}^1\rightarrow S$ be a non constant holomorphic map. Then, either $f$ is tangent to one of the foliations $\mathcal{F}_1$, $\mathcal{F}_2$ given by the fibrations $\sigma_1$, $\sigma_2$ respectively, or $f$ is not.
	
	If $f$ is tangent to one of the foliations, let us say to $\mathcal{F}_1$, then $f(\mathbb{P}^1)$ must be contained in a singular fiber of $\sigma_1:S\rightarrow C_1/G$. Otherwise $f(\mathbb{P}^1)$ would be contained in a smooth fiber, but smooth fibers are hyperbolic, since they are isomorphic to $C_2$ and $g(C_2)\geq 2$, and this is contradiction. Therefore, 
	\[
		f(\mathbb{P}^1)\subset Y\cup E
	\]
	where $Y$ is the central component  and $E$ is the exceptional divisor;
	however, lemma \ref{cc} tells us that $g(Y)\geq 1$ and hence $f(\mathbb{P}^1)\subset E$.
	
	Now, let us suppose that $f$ is not tangent to any of the foliations and let us consider the composition $\widehat{f}:=\varphi\circ f$. So we have the following diagram:
	
	\begin{center}
		\raisebox{-0.5\height}{\includegraphics[scale=1.25]{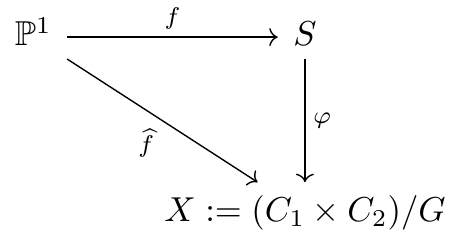}}
	\end{center}
	
	By Proposition \ref{2} we can consider a non zero section $\omega\in H^0(S,\mathcal{O}(m(K_S-E)))$ for $m$ large enough, and recall the section $\Theta(\omega)\in H^0(X^{reg},\mathcal{S}^{2m}\Omega_X)$ obtained via $\Theta$.  Then the section $\widehat{f}^*\Theta(\omega)=f^*\Gamma(\omega)$ vanishes because $H^0(\mathbb{P}^1,\Omega_{\mathbb{P}^1})=0$; moreover, since $\Theta(\omega)$ is locally written as $a(z_1,z_2){dz_1}^{m}{dz_2}^{m}$ and  $\widehat{f}$ is locally given by $\widehat{f}=(\widehat{f_1},\widehat{f_2})=\varphi(f_1,f_2)$ where $f_1,f_2$, $\widehat{f_1},\widehat{f_2}$ are holomorphic functions, the section $\widehat{f}^*\Theta(\omega)$ is locally given as
	\[
		a(\widehat{f_1},\widehat{f_2})(\widehat{f_1}')^m(\widehat{f_2}')^m=0
	\]	
	Thus we obtain that $a(\widehat{f_1},\widehat{f_2})=(a\circ\varphi)(f_1,f_2)=0$ since by hypothesis the other factors are not always equal to zero. This last equation means that the section $\omega$ vanishes on $f(\mathbb{P}^1)$, but this is true for any section of $\mathcal{O}(m(K_S-E))$, then $f(\mathbb{P}^1)\subset \Bs(m(K_S-E))$ and therefore $f(\mathbb{P}^1)\subset \mathbb{B}(K_S-E)$.
\end{proof}
	
	\subsection{Entire curves}
	We have already seen that the central components of singular fibers on $S$ are not rational, but we do not know yet if they can be elliptic. In the following example we will see that, in fact, for any product-quotient surface, the central components that do not intersect with the exceptional divisor, have genus bigger than one; however, in the case where the central components do intersect with the exceptional divisor, we give an example of a surface with a central component that is elliptic.

\begin{Ex}\label{ex}
	
	Let $S$ be a quotient-product surface and let us consider the natural fibration $\sigma_1:S\rightarrow C_1/G$ and a point $\overline{x}\in C_1/G$ with non trivial stabilizer $H_x$. Recall that the fiber $F$ of $\sigma_1$ over $\overline{x}$ is the union of a central component $Y\simeq C_1/H_x$ and either none or at least two mutually disjoint H-J strings which are in one-to-one correspondence with the branch points of $C_2\rightarrow C_2/H_x$. 
	
	In the first case, using the Riemann-Hurwitz formula, we obtain
    \[
	    2g(C_1)-2=|H_x|(2g(Y)-2),
	\]
    but $2g(C_1)-2>0$, then $g(Y)\geq 2$.
    
    For the second case, we suppose $S$ belongs to the first family of the table given in section $3$. Since $X=(C_1\times C_2)/G$ has only two singularities of type $\frac{1}{2}(1,1)$, then $C_2\rightarrow C_2/H_x$ has two branch points with multiplicity equal to $2$. Thus, using the Riemann-Hurwitz formula, we get
    \[
	    2g(C_1)-2=|H_x|(2g(Y)-1),
	\]
    but $g(C_1)=3$, then,
    \[
	    4=|H_x|(2g(Y)-1).
	\]
    We easily conclude that $|H_x|$ must be equal to $4$ and hence $g(Y)=1$.
\end{Ex}
    
Now, we recall a well known theorem asserting  that entire curves satisfy an algebraic differential equation. Namely:
   
\begin{Th}[(\cite{D}, Corollary 7.9), \cite{gr}]
    If there exists a non zero section $s\in H^0(S,\mathcal{S}^{m}\Omega_S\otimes A^{-1})$ with $A$ an ample line bundle and $m$ an integer, then for every entire curve $f:\mathbb{C}\rightarrow S$, $f^*s=0$.  
\end{Th}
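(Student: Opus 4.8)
The statement is a classical vanishing theorem for entire curves against symmetric differentials (Bloch, Green--Griffiths; see also the cited notes of Demailly). The plan is to reduce it to the Ahlfors--Schwarz lemma by manufacturing, out of $f$ and $s$, a singular hermitian pseudo-metric on $\mathbb{C}$ whose curvature dominates a positive multiple of the metric itself; a pseudo-metric with that property that is defined on the whole plane is forced to vanish identically, and this vanishing is precisely the assertion $f^{*}s=0$.

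First I would set things up. We may assume $f$ non constant (otherwise $f^{*}s=0$ trivially). Pulling back $s$ along $f$ gives $f^{*}s\in H^{0}\!\left(\mathbb{C},\Omega_{\mathbb{C}}^{\otimes m}\otimes f^{*}A^{-1}\right)$, which, after dividing by the nowhere vanishing frame $(dz)^{\otimes m}$, is a holomorphic section $g$ of the line bundle $f^{*}A^{-1}$ on $\mathbb{C}$; in a local trivialization of $A$ in which $s=\bigl(\sum_{|\alpha|=m}a_{\alpha}\,dx^{\alpha}\bigr)\otimes e$ one has $g(z)=\sum_{|\alpha|=m}a_{\alpha}(f(z))\prod_{i}f_{i}'(z)^{\alpha_{i}}$. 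Since $(dz)^{\otimes m}$ is a frame, $f^{*}s=0$ is equivalent to $g\equiv0$, so I argue by contradiction and assume $g\not\equiv0$; then $g$ has only isolated zeros.

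Next I would fix an auxiliary metric and build the pseudo-metric. As $A$ is ample, choose a smooth hermitian metric on $A$ with positive curvature form $\omega=i\Theta(A)>0$ of local weight $\varphi$ (so $i\partial\bar\partial\varphi=\omega$), and equip $T_{S}$, $S^{m}\Omega_{S}$ and $A^{-1}$ with the metrics induced by $\omega$. Put $G(z):=\bigl\lVert f^{*}s/(dz)^{\otimes m}\bigr\rVert^{2}=|g(z)|^{2}e^{\varphi(f(z))}$ and $\gamma:=G(z)^{1/m}\,i\,dz\wedge d\bar z$, a pseudo-metric on $\mathbb{C}$ with isolated degeneration locus. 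Two estimates then make the argument work. From $\log G=\log|g|^{2}+\varphi\circ f$ and Poincar\'e--Lelong,
\[
 i\partial\bar\partial\log\!\bigl(G^{1/m}\bigr)=\tfrac{2\pi}{m}[\operatorname{div}g]+\tfrac1m f^{*}\omega\ \ge\ \tfrac1m f^{*}\omega
\]
in the sense of currents, and in particular $\log\bigl(G^{1/m}\bigr)$ is subharmonic. On the other hand, reading $g(z)$ as the pairing of $s(f(z))\in S^{m}\Omega_{S}\otimes A^{-1}$ with $f'(z)^{\otimes m}$ and using that $S$ is compact (so the pointwise operator norm of $s$ is bounded by some $C>0$), Cauchy--Schwarz gives $G(z)\le C\,\lVert f'(z)\rVert^{2m}$, hence $\gamma=G^{1/m}\,i\,dz\wedge d\bar z\le C^{1/m}f^{*}\omega$. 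Combining the two, $i\partial\bar\partial\log\bigl(G^{1/m}\bigr)\ge\tfrac1m f^{*}\omega\ge\tfrac1{mC^{1/m}}\,\gamma$, so $\gamma$ has curvature bounded below by $A\gamma$ with $A:=\tfrac1{mC^{1/m}}>0$.

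Finally I would invoke the Ahlfors--Schwarz lemma: a pseudo-metric $i\gamma_{0}\,dz\wedge d\bar z$ on $\{|z|<R\}$ with $\log\gamma_{0}$ subharmonic and $i\partial\bar\partial\log\gamma_{0}\ge A\gamma$ for a constant $A>0$ is bounded above by $\tfrac2A$ times the Poincar\'e metric of the disc of radius $R$; letting $R\to\infty$, which is legitimate since $\gamma$ is defined on all of $\mathbb{C}$, forces $\gamma\equiv0$, hence $G\equiv0$ and $g\equiv0$ --- a contradiction. Therefore $f^{*}s=0$. The point requiring care, which I expect to be the main obstacle, is the degeneration of $\gamma$ along the zero set of $g$: one must use the form of the Ahlfors--Schwarz lemma valid for pseudo-metrics with merely subharmonic weight and interpret the curvature inequality as an inequality of currents, the divisorial term $\tfrac{2\pi}{m}[\operatorname{div}g]\ge0$ only helping; the remaining steps (the local curvature computation and the Cauchy--Schwarz estimate using the compactness of $S$) are routine. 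Alternatively, the same conclusion follows from Nevanlinna theory via the lemma on logarithmic derivatives, but the Ahlfors--Schwarz route is the most economical.
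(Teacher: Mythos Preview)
Your argument is correct and is essentially the standard proof found in the references the paper cites (Demailly's notes, Corollary~7.9, going back to Green--Griffiths): build from $f^{*}s$ a singular pseudo-metric $\gamma$ on $\mathbb{C}$, use positivity of $i\Theta(A)$ together with the pointwise bound $\lVert s\rVert\le C$ on the compact $S$ to get a curvature inequality $i\partial\bar\partial\log\gamma\ge\varepsilon\gamma$, and conclude by Ahlfors--Schwarz on discs of radius $R\to\infty$. The only delicate point you flag---that the Ahlfors--Schwarz lemma must be applied in its form for pseudo-metrics with subharmonic weight, the divisorial current $\tfrac{2\pi}{m}[\operatorname{div}g]$ being nonnegative---is indeed the right thing to check, and it is handled exactly as you say.

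Note, however, that the paper does not give its own proof of this statement at all: it is merely \emph{recalled} as a well-known theorem with references to \cite{D} and \cite{gr}, and then used as a black box in the proof of Proposition~\ref{weakentire}. So there is no ``paper's proof'' to compare against; your write-up is a faithful reconstruction of the cited argument.
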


Using this result we can follow the same argument used in Proposition \ref{rational}, to prove an analogous result for entire curves.  

\begin{Prop}\label{weakentire}
	Let $f:\mathbb{C}\rightarrow S$ be a non constant holomorphic map. Then 
	\[
		f(\mathbb{C})\subset \mathbf{Y}\cup E\cup \mathbb{B^+}(K_S-E)
	\]
	where $\mathbf{Y}$ is the union of all central components, $E$ is the exceptional divisor and 
	\[
		\mathbb{B^+}(K_S-E):=\bigcap_{m>0}\Bs(m(K_S-E)-A)
	\] 
	with $A$ an ample line bundle, is the augmented base locus of $K_S-E$.
\end{Prop}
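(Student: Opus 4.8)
The plan is to mirror the structure of the proof of Proposition \ref{rational}, replacing $\mathbb{P}^1$ by $\mathbb{C}$ and replacing the vanishing of $H^0(\mathbb{P}^1,\Omega_{\mathbb{P}^1})$ by the differential-equation theorem quoted just above. As in the rational case, I would split into two cases according to whether the entire curve $f:\mathbb{C}\rightarrow S$ is tangent to one of the foliations $\mathcal{F}_1,\mathcal{F}_2$ or not.

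First, suppose $f$ is tangent to one of the foliations, say $\mathcal{F}_1$. Then $f(\mathbb{C})$ is contained in a fiber of $\sigma_1:S\rightarrow C_1/G$. It cannot be contained in a smooth fiber, since those are isomorphic to $C_2$ with $g(C_2)\geq 2$, hence Brody hyperbolic (Picard's theorem: there is no non-constant entire curve into a compact Riemann surface of genus $\geq 2$). So $f(\mathbb{C})$ lies in a singular fiber, which is the union of a central component $Y$ and H-J strings contained in $E$; thus $f(\mathbb{C})\subset \mathbf{Y}\cup E$, and in particular it is contained in $\mathbf{Y}\cup E\cup\mathbb{B^+}(K_S-E)$.

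Second, suppose $f$ is not tangent to either foliation. By Proposition \ref{3} (more precisely by the bigness of $\mathcal{O}(K_S-E)$, Proposition \ref{2}), there exist an ample line bundle $A$, an integer $m>0$, and a non-zero section $\omega\in H^0\!\big(S,\mathcal{O}(m(K_S-E))\otimes A^{-1}\big)$; via Proposition \ref{1} this yields a non-zero section $\Gamma(\omega)\in H^0\!\big(S,\mathcal{S}^{2m}\Omega_S\otimes A^{-1}\big)$. The differential-equation theorem then forces $f^*\Gamma(\omega)=0$. Writing $\Theta(\omega)$ locally as $a(z_1,z_2)\,dz_1^{m}\,dz_2^{m}$ on $C_1\times C_2$ and $\widehat f=\varphi\circ f=(\widehat f_1,\widehat f_2)$ as in Proposition \ref{rational}, the relation $f^*\Gamma(\omega)=\widehat f^{\,*}\Theta(\omega)=0$ reads, away from $E$, as $(a\circ\varphi)(f_1,f_2)\,(\widehat f_1')^{m}(\widehat f_2')^{m}=0$; since $f$ is tangent to neither foliation the derivative factors are not identically zero, so $(a\circ\varphi)(f_1,f_2)\equiv 0$, i.e. $\omega$ vanishes identically on $f(\mathbb{C})$. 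As this holds for every section of $H^0\!\big(S,\mathcal{O}(m(K_S-E))\otimes A^{-1}\big)$ and, varying $m$, for every such twisted linear system, we conclude $f(\mathbb{C})\subset \bigcap_{m>0}\Bs\big(m(K_S-E)-A\big)=\mathbb{B^+}(K_S-E)$.

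The main subtlety, exactly as in Proposition \ref{1} and Proposition \ref{rational}, is the behaviour along the exceptional divisor $E$: the sections $\Gamma(\omega)$ and the local identities above are constructed on $X^{reg}$ and a priori only hold on $S\setminus E$, so one must check that twisting down by $A$ does not destroy the extension across $E$ guaranteed by Proposition \ref{1}, and that the vanishing conclusion on $f(\mathbb{C})\setminus E$ propagates to all of $f(\mathbb{C})$. This is handled by noting that $f(\mathbb{C})$ is either contained in $E$ (in which case there is nothing to prove) or meets $S\setminus E$ in a dense open subset of $\mathbb{C}$, so vanishing of $\omega$ there implies vanishing on the Zariski closure of $f(\mathbb{C})$, and the local extension argument of Proposition \ref{1} applies verbatim to the twisted section. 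Everything else is a direct transcription of the two cited propositions.
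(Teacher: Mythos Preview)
Your proposal is correct and follows essentially the same approach as the paper: the same two-case split on tangency to the foliations, the same Brody hyperbolicity argument in the tangent case, and the same use of the bigness of $K_S-E$ together with the differential-equation theorem and the local computation from Proposition~\ref{rational} in the transverse case. Your additional paragraph addressing the extension of $\Gamma(\omega)$ across $E$ after twisting by $A^{-1}$ is a point the paper leaves implicit, but otherwise the arguments coincide.
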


\begin{proof}
	For any entire curve $f:\mathbb{C}\rightarrow S$ we also have the following two possibilities: either $f$ is tangent to one of the foliations $\mathcal{F}_1$, $\mathcal{F}_2$, or $f$ is not.
	In the first case we have again that $f(\mathbb{C})$ must be contained in the singular fibers because the smooth ones are Brody hyperbolic. Thus, $f(\mathbb{C})\subset \mathbf{Y}\cup E$.
	In the second case, the bigness of the line bundle $K_S-E$ ensures the existence of a non zero section $s\in H^0(S,\mathcal{O}(m(K_s-E))\otimes A^{-1})$ with $A$ ample and $m$ large enough, and via $\Gamma$ we obtain a non zero section $\Gamma(s)\in H^0(S,\mathcal{S}^{2m}\Omega_S\otimes A^{-1})$. So we have that $f^*\Gamma(s)=0$ and then, following the same argument than in the case of rational curves we obtain $f(\mathbb{C})\subset \mathbb{B}^+(m(K_s-E))$.
\end{proof}	

Note that Example \ref{ex} shows that, a priori, we can not avoid the central components because they could be elliptic, but a later result will show us that elliptic curves are contained in the the augmented base locus of $K_S-E$.
	
	\section{Effective version of Lang's conjecture}
	The purpose of this section is to prove Theorem \ref{effective}. Let us begin by recalling some basic facts that will be used. Let $S$ be a product-quotient surface, $C$ a smooth projective curve and $f:C\rightarrow S$ a holomorphic map such that $f(C)\nsubseteq E$. The differential map $df:T_C(-\log f^{-1}E)\rightarrow T_S(-\log E)$ induces a lifting $f_{[1]}: C\to \mathbb{P}(T_S(-\log E))$. Thus we get the following diagram:

\begin{center}
	\raisebox{-0.5\height}{\includegraphics[scale=1.25]{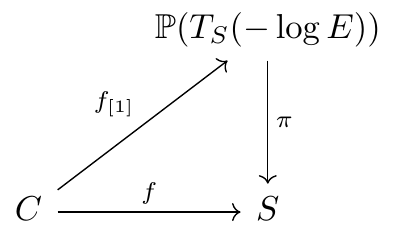}}
\end{center}

 Moreover, we have that  $\pi_*\mathcal{O}_{\mathbb{P}(T_S(-\log E))}(1)\simeq \Omega_S(\log E)$.
 On the other hand, recall that for each foliation $\mathcal{F}$ on $S$, we have the logarithmic exact sequence
 
\begin{center}
	\raisebox{-0.5\height}{\includegraphics[scale=1.25]{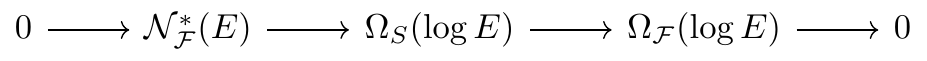}}
\end{center}

 and we can also define the divisor $Z:=\mathbb{P}(T_{\mathcal{F}}(-\log E))$ on $\mathbb{P}(T_{\mathcal{S}}(-\log E))$.

\begin{Lemma}\label{inequality}
	Let $\mathcal{F}$ be a foliation on $S$, $C$ a smooth projective surface and $f:C\rightarrow S$ a holomorphic map such that $f(C)\nsubseteq E$. If $f$ is not tangent to $\mathcal{F}$, then
	\[
		\deg f^*\mathcal{N}^*_{\mathcal{F}}(E)\leq 2g(C)-2+N_1(E)
	\]
	where $N_1(E)$ is the number of points on $f^{-1}(E)$ counted without multiplicities.
\end{Lemma}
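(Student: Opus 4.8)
The plan is to read the inequality off from the logarithmic differential of $f$ together with the logarithmic foliation sequence recalled just above (Diagram~8). First I would set $\Sigma := f^{-1}(E)_{\mathrm{red}}$; by the very definition of $N_1(E)$, the reduced divisor $\Sigma$ consists of exactly $N_1(E)$ points of $C$, so that
\[
	\deg T_C(-\log \Sigma)=2-2g(C)-N_1(E).
\]
Since $f$ maps $\Sigma$ into $E$, the logarithmic differential is a morphism of line bundles $df\colon T_C(-\log \Sigma)\to f^*T_S(-\log E)$ (this is the map appearing in Diagram~7).

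The first point to establish is that the divisor $E$ is invariant under $\mathcal{F}$. This is true for both natural foliations $\mathcal{F}_1,\mathcal{F}_2$: every connected component of $E$ is an H--J string contracted by $\sigma_1\times\sigma_2$, hence contained in a single fiber of each of $\sigma_1$ and $\sigma_2$, so it is a union of leaves and singular points of the corresponding foliation. Granting this, the inclusion $T_{\mathcal{F}}(-\log E)\hookrightarrow T_S(-\log E)$ in the logarithmic exact sequence of $\mathcal{F}$ makes sense and has quotient a line bundle, which I denote $\mathcal{N}_{\mathcal{F}}(-\log E)$; comparing determinants (using $\det T_S(-\log E)=\mathcal{O}(-K_S-E)$ and $T_{\mathcal{F}}(-\log E)=T_{\mathcal{F}}$) gives $\mathcal{N}_{\mathcal{F}}(-\log E)\cong \mathcal{N}_{\mathcal{F}}\otimes\mathcal{O}(-E)$, so that its dual is exactly $\mathcal{N}^*_{\mathcal{F}}(E)$.

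Next I would compose $df$ with the pullback by $f$ of the projection $T_S(-\log E)\to \mathcal{N}_{\mathcal{F}}(-\log E)$. The hypothesis that $f$ is not tangent to $\mathcal{F}$ is precisely the statement that this composite $T_C(-\log \Sigma)\to f^*\mathcal{N}_{\mathcal{F}}(-\log E)$ is not identically zero; being a nonzero morphism of line bundles on the integral curve $C$, it is injective, whence
\[
	2-2g(C)-N_1(E)=\deg T_C(-\log \Sigma)\leq \deg f^*\mathcal{N}_{\mathcal{F}}(-\log E)=-\deg f^*\mathcal{N}^*_{\mathcal{F}}(E).
\]
Rearranging yields $\deg f^*\mathcal{N}^*_{\mathcal{F}}(E)\leq 2g(C)-2+N_1(E)$, as claimed.

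All the steps are standard once the framework is in place; the only genuine point requiring care is the identification of the quotient of the logarithmic sequence with $\mathcal{N}_{\mathcal{F}}\otimes\mathcal{O}(-E)$, which rests on $E$ being $\mathcal{F}$-invariant (so that $T_{\mathcal{F}}$ indeed sits inside $T_S(-\log E)$ and the determinant computation applies). Verifying that ``$f$ not tangent to $\mathcal{F}$'' is equivalent to non-vanishing of the composite morphism is the other routine but essential check; it is exactly the same mechanism as in the proof of Proposition~\ref{rational}.
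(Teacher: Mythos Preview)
Your argument is correct and is essentially the dual, sheaf-theoretic reformulation of the paper's proof. The paper works on $\mathbb{P}(T_S(-\log E))$: it introduces the section $Z=\mathbb{P}(T_{\mathcal{F}}(-\log E))$, identifies $\pi_*(\mathcal{O}(1)-[Z])\simeq \mathcal{N}^*_{\mathcal{F}}(E)$, uses $f_{[1]}(C)\nsubseteq Z$ to get $\deg f^*\mathcal{N}^*_{\mathcal{F}}(E)\le \deg f_{[1]}^*\mathcal{O}(1)$, and then bounds $\deg f_{[1]}^*\mathcal{O}(1)$ by $2g(C)-2+N_1(E)$ via the tautological map $T_C(-\log f^{-1}E)\to f_{[1]}^*\mathcal{O}(-1)$. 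You collapse these two steps into one by composing the logarithmic differential directly with the projection $f^*T_S(-\log E)\to f^*\bigl(\mathcal{N}_{\mathcal{F}}\otimes\mathcal{O}(-E)\bigr)$; the non-vanishing of this composite is exactly the conjunction of the paper's two non-degeneracy conditions ($df\neq 0$ and $f_{[1]}(C)\nsubseteq Z$). Your route is slightly more elementary since it never leaves $S$ and $C$, and it has the virtue of making explicit the point the paper leaves implicit: that the logarithmic sequence in Diagram~8 requires $E$ to be $\mathcal{F}$-invariant, which you verify for the two foliations actually used in Proposition~\ref{degree}.
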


\begin{proof}
	For the sake of simplicity we denote by $\mathcal{O}(1)$ the line bundle $\mathcal{O}_{\mathbb{P}(T_S(-\log E))}(1)$. Let us consider the exact sequence
	
	\begin{center}
		\raisebox{-0.5\height}{\includegraphics[scale=1.25]{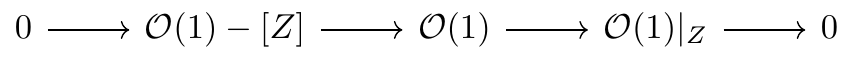}}
	\end{center}

	Now, taking the push-forwards we get 

	\begin{center}
		\raisebox{-0.5\height}{\includegraphics[scale=1.25]{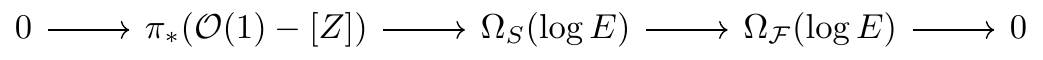}}
	\end{center}
	and thus we obtain $\pi_*(\mathcal{O}(1)-[Z])\simeq \mathcal{N}^*_{\mathcal{F}}(E)$. On the other hand, since $f$ is not tangent to $\mathcal{F}$ then $f_{[1]}(C)\nsubseteq Z$, thus $f_{[1]}(C).Z\geq 0$ and hence $\deg f^*_{[1]}[Z]\geq 0$. Therefore, 
	\[
		\deg f^*\mathcal{N}^*_{\mathcal{F}}(E)\leq \deg f^*_{[1]}\mathcal{O}(1).
	\]
	
	Moreover, the differential map 
	\[
		df:T_C(-f^{-1}(E))\longrightarrow f^*_{[1]}\mathcal{O}(-1)
	\]
	defines a non zero section of the line bundle $f^*_{[1]}\mathcal{O}(-1)\otimes K_C(f^{-1}(E))$ implying that this line bundle is effective. Then,
	\[
		\deg f^*_{[1]}\mathcal{O}(1)\leq \deg K_C(f^{-1}(E))=2g(C)-2+N_1(E).
	\]	
\end{proof}

\begin{Prop}\label{degree}
	Let $S$ be a product-quotient surface. If $f:C\rightarrow S$ is a holomorphic map such that $f(C)\nsubseteq E$, with $C$ a smooth projective curve and $E$ the exceptional divisor on $S$, then
	\[
		\deg f^*(K_S-E)\leq 2(2g(C)-2)
	\]
\end{Prop}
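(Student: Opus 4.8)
The plan is to reduce the statement to Lemma~\ref{inequality} by exploiting Serrano's formula for the canonical bundle in its foliated form, namely $K_S = \mathcal{N}^*_{\mathcal{F}_1}\otimes\mathcal{N}^*_{\mathcal{F}_2}\otimes\mathcal{O}_S(E)$, which rewrites as $K_S - E = \mathcal{N}^*_{\mathcal{F}_1}\otimes\mathcal{N}^*_{\mathcal{F}_2}$. Pulling back along $f$ gives $\deg f^*(K_S-E) = \deg f^*\mathcal{N}^*_{\mathcal{F}_1} + \deg f^*\mathcal{N}^*_{\mathcal{F}_2}$. Since $K_S - E = (\mathcal{N}^*_{\mathcal{F}_1}(E)) \otimes (\mathcal{N}^*_{\mathcal{F}_2}(E)) \otimes \mathcal{O}_S(-2E)$, it will be cleaner to write $\deg f^*(K_S-E) = \deg f^*\mathcal{N}^*_{\mathcal{F}_1}(E) + \deg f^*\mathcal{N}^*_{\mathcal{F}_2}(E) - 2\deg f^*E$, and to control each summand separately.

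The key case is when $f(C)$ is not contained in any fiber of $\sigma_1$ or $\sigma_2$, i.e.\ $f$ is not tangent to either foliation. Then Lemma~\ref{inequality} applies to both $\mathcal{F}_1$ and $\mathcal{F}_2$, giving $\deg f^*\mathcal{N}^*_{\mathcal{F}_i}(E) \leq 2g(C)-2 + N_1(E)$ for $i=1,2$, where $N_1(E)$ is the number of points of $f^{-1}(E)$ without multiplicity. Summing, $\deg f^*(K_S-E) \leq 2(2g(C)-2) + 2N_1(E) - 2\deg f^*E$. The remaining point is that $\deg f^*E \geq N_1(E)$: this holds because $\deg f^*E$ is the total number of preimages of $E$ counted \emph{with} multiplicity, each of which is at least $1$, so $\deg f^*E \geq N_1(E)$. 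Hence $2N_1(E) - 2\deg f^*E \leq 0$ and the desired bound follows. One should check that the case $f(C)\subseteq E$ is excluded by hypothesis, so all intersection multiplicities of $f(C)$ with $E$ are finite and the inequality $\deg f^*E \geq N_1(E)$ is legitimate.

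It remains to handle the case where $f$ \emph{is} tangent to one of the foliations, say $\mathcal{F}_1$, so that $f(C)$ lies in a single fiber of $\sigma_1$. By the hypothesis $f(C)\nsubseteq E$, the image must meet the central component $Y$ of that fiber, and in fact (since the fiber is $Y$ together with disjoint H-J strings of rational curves) either $f(C)\subseteq Y$ or $f(C)$ is contained in one of the rational string components — but the latter is ruled out as those components lie in $E$. So $f$ factors (up to normalization) through $Y\simeq C_1/H_x$; here I would still run the estimate using $\mathcal{F}_2$, to which $f$ is \emph{not} tangent, via Lemma~\ref{inequality}, and bound the $\mathcal{F}_1$-contribution directly using that $f^*\mathcal{N}^*_{\mathcal{F}_1}$ restricted to a fiber is controlled by the multiplicity data of that fiber (the self-intersection formula $Y^2 = -\sum a_i/n_i$ and the adjunction computation in Lemma~\ref{cc}). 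The main obstacle I anticipate is precisely this tangent case: making the bookkeeping of the conormal bundle along a (possibly multiple, possibly singular-in-$S$) fiber rigorous, and confirming that the contribution from the foliation to which $f$ is tangent does not spoil the factor $2$ in front of $2g(C)-2$. The non-tangent case, by contrast, is essentially immediate once Serrano's formula and Lemma~\ref{inequality} are in hand.
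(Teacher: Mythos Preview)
Your non-tangent case is correct and matches the paper's argument essentially verbatim; you even make explicit the step $\deg f^*E \geq N_1(E)$, which the paper leaves implicit when passing from the bound on $\deg f^*(K_S+E)$ to the bound on $\deg f^*(K_S-E)$.

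The gap is in the tangent case, where your plan---apply Lemma~\ref{inequality} to the transverse foliation $\mathcal{F}_2$ and then control $\deg f^*\mathcal{N}^*_{\mathcal{F}_1}$ by hand---is left unfinished and is more intricate than necessary. The paper bypasses the foliated formalism entirely here: since $f(C)\nsubseteq E$ and $f(C)$ lies in a fiber $F$ of $\sigma_1$, the image is the central component $Y$ (or all of $F\cong C_2$ if $F$ is smooth). One then computes $(K_S-E)|_Y$ directly by adjunction. For a smooth fiber $F\cong C_2$ one has $(K_S-E)|_{C_2}=K_{C_2}$, so $\deg f^*(K_S-E)=\deg f^*K_{C_2}\leq 2g(C)-2$ by Riemann--Hurwitz. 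For a singular fiber with $r$ H-J strings of types $\tfrac{1}{n_i}(1,a_i)$, adjunction gives
\[
(K_S-E)\cdot Y \;=\; \deg K_Y-\bigl(Y\cdot E+Y^2\bigr)
\;=\;\deg K_Y-\sum_{i=1}^r\frac{n_i-a_i}{n_i}\;\leq\;\deg K_Y,
\]
using $Y\cdot E=r$ and $Y^2=-\sum a_i/n_i$. Hence again $\deg f^*(K_S-E)\leq \deg f^*K_Y\leq 2g(C)-2$. So in the tangent case one actually obtains the stronger bound $2g(C)-2$ rather than $2(2g(C)-2)$, and no appeal to Lemma~\ref{inequality} is needed. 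The ``bookkeeping'' you anticipated as the main obstacle dissolves once you restrict $K_S-E$ to $Y$ and invoke the self-intersection formula $Y^2=-\sum a_i/n_i$ together with Riemann--Hurwitz for the finite map $f\colon C\to Y$.
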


\begin{proof}
	First, let us suppose that $f$ is not tangent to any of the foliations $\mathcal{F}_1$, $\mathcal{F}_2$. Then by Lemma $\ref{inequality}$ we have that for $i=1,2$
	\[
		\deg f^*\mathcal{N}^*_{\mathcal{F}_i}(E)\leq 2g(C)-2+N_1(E).
	\]	
	Using Serrano's formula for the canonical bundle, we get that	
	\begin{align*}
		\deg f^*(K_S+E)
		&=\sum_{i=1}^2\deg f^*\mathcal{N}^*_{\mathcal{F}_i}(E)\\
		&\leq2(2g(C)-2+N_1(E))\\
		&= 2(2g(C)-2)+2N_1(E)
	\end{align*}	
	Therefore,
	\[
		\deg f^*(K_S-E)\leq 2(2g(C)-2).
	\]
	
	Now, we suppose that $f$ is tangent to one of the foliations, let us say to $\mathcal{F}_1$, then $f(C)$ is contained in a fiber $F$ of $\sigma_1:S\rightarrow C_1/G$. If $F$ is a smooth fiber we know that it is isomorphic to the curve $C_2$, and then,
	\[
		\deg f^*(K_S-E)=(K_S-E).C_2=K_{C_2}.C_2\leq K_C.C =2g(C)-2.
	\]
	
	If $F$ is a singular fiber, $f(C)$ must be contained in the central component $Y$ of the reduced structure of $F$ and hence,
	\[
		\deg f^*(K_S-E)=(K_S-E).Y=K_{Y}.Y-(Y.E+Y^2).
	\]
	When $F$ does not contain any H-J string we have $Y^2=0$ and $Y.E=0$; thus,
	\[
		\deg f^*(K_S-E)=K_{Y}.Y\leq K_C.C =2g(C)-2.
	\]
	On the other hand, when $F$ contains exactly $r$ H-J strings, $L_1,\cdots,L_r$, where each $L_i$ is the resolution of a cyclic quotient singularity of type $\frac{1}{n_i}(1,a_i)$, we have that
	\[
		Y.E+Y^2=r-\displaystyle\sum_{i=1}^r\dfrac{a_i}{n_i}\geq 0,
	\]
	and thus,
	\[
		\deg f^*(K_S-E)\leq K_{Y}.Y\leq K_C.C =2g(C)-2.
	\]
\end{proof}

As a consequence of the previous result we get first an alternative proof of Proposition \ref{rational}.

\begin{Cor}
	Let $S$ be a product-quotient surface of general type such that $P_g=0$ and $c_1^2=6$. If $f:\mathbb{P}^1\rightarrow S$ is a holomorphic map such that $f(\mathbb{P}^1)\nsubseteq E$, then
	\[
		f(\mathbb{P}^1)\subset \mathbb{B}(K_S-E)
	\]
	where $\mathbb{B}(K_S-E)$ is the stable base locus of $K_S-E$.
\end{Cor}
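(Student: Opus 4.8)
The plan is to read the result off from Proposition \ref{degree} by specializing to $C=\mathbb{P}^1$. As in Proposition \ref{rational}, there is nothing to prove for a constant map, so assume $f$ is non-constant. Applying Proposition \ref{degree} with $C=\mathbb{P}^1$, for which $2g(C)-2=-2$, gives
\[
	\deg f^*(K_S-E)\leq 2\bigl(2g(\mathbb{P}^1)-2\bigr)=-4<0.
\]

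Next I would convert the negativity of this degree into the desired base-locus statement. Fix $m>0$ and let $s\in H^0\bigl(S,\mathcal{O}(m(K_S-E))\bigr)$ be arbitrary. Its pullback $f^*s$ is a global section of the line bundle $f^*\mathcal{O}(m(K_S-E))$ on $\mathbb{P}^1$, which has degree $m\deg f^*(K_S-E)\leq -4m<0$. Since a line bundle of negative degree on $\mathbb{P}^1$ has no non-zero global section, $f^*s=0$, i.e.\ $s$ vanishes identically on $f(\mathbb{P}^1)$. As $s$ was arbitrary, $f(\mathbb{P}^1)\subset \Bs(m(K_S-E))$, and intersecting over all $m>0$ yields $f(\mathbb{P}^1)\subset \mathbb{B}(K_S-E)$, as claimed.

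There is no genuine obstacle here: the corollary is a formal consequence of Proposition \ref{degree} together with the elementary fact that $\deg K_{\mathbb{P}^1}<0$. The one point deserving a word of care is the tacit non-constancy of $f$ — for a constant map the pullback bundle is trivial of degree $0$, so neither the argument above nor Proposition \ref{degree} itself applies, but such maps are irrelevant to the statement. Compared with the proof of Proposition \ref{rational}, this argument has the merit of dispensing with the case analysis according to tangency with the foliations $\mathcal{F}_1,\mathcal{F}_2$, since that dichotomy has already been absorbed into Proposition \ref{degree}.
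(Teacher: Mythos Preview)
Your proof is correct and follows essentially the same approach as the paper: both apply Proposition~\ref{degree} with $C=\mathbb{P}^1$ to obtain $\deg f^*(K_S-E)\leq -4$, and then deduce containment in the stable base locus. The paper phrases the second step as a contrapositive (``if $f(\mathbb{P}^1)\nsubseteq \mathbb{B}(K_S-E)$ then $\deg f^*(K_S-E)\geq 0$''), whereas you spell it out directly via vanishing of sections of negative-degree line bundles on $\mathbb{P}^1$; the paper also invokes bigness of $K_S-E$, but as your argument shows this is not actually needed for the implication itself (only to guarantee that $\mathbb{B}(K_S-E)\subsetneq S$, so that the conclusion has content).
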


\begin{proof}
	We already know that in this case $K_S-E$ is big, so if $f(\mathbb{P}^1)\nsubseteq \mathbb{B}(K_S-E)$, $\deg f^*(K_S-E)\geq 0$. However, from Proposition $\ref{degree}$ we obtain $\deg f^*(K_S-E)\leq -4$. A contradiction. Thus $f(\mathbb{P}^1)\subset \mathbb{B}(K_S-E)$.
\end{proof}

In fact, we can also localize elliptic curves as shown by the following proposition.

\begin{Prop}\label{elliptic}
	If $S$ is a product-quotient surface of general type such that $P_g=0$ and $c_1^2=6$, and $f:C\rightarrow S$ is a holomorphic map where $C$ is a smooth projective curve of genus $g(C)=1$, then
	\[
		f(C)\subset \mathbb{B^+}(K_S-E)
	\]
	where $\mathbb{B^+}(K_S-E)$ is the augmented base locus of $K_S-E$.
\end{Prop}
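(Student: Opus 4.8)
The plan is to obtain this as a short consequence of the effective inequality in Proposition~\ref{degree} (Theorem~\ref{effective}) together with the bigness of $K_S-E$ from Proposition~\ref{2}, in the same spirit as the corollary for rational curves above. Exactly as there, I would assume $f$ non constant and $f(C)\nsubseteq E$; the complementary possibility, in which $f(C)$ is one of the two exceptional $(-2)$-curves, is either excluded or disposed of separately, since it does not affect the elliptic curves of interest (for instance the central components of Example~\ref{ex}).

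So let $f:C\to S$ be non constant with $g(C)=1$ and $f(C)\nsubseteq E$. Proposition~\ref{degree} gives immediately
\[
	\deg f^*(K_S-E)\leq 2(2g(C)-2)=0 .
\]
Assume for contradiction that $f(C)\nsubseteq\mathbb{B^+}(K_S-E)$, and fix the ample line bundle $A$ from the definition $\mathbb{B^+}(K_S-E)=\bigcap_{m>0}\Bs\bigl(m(K_S-E)-A\bigr)$. Choosing $x\in f(C)$ with $x\notin\mathbb{B^+}(K_S-E)$, there is an $m>0$ and a section $s\in H^0\bigl(S,\mathcal{O}(m(K_S-E)-A)\bigr)$ with $s(x)\neq 0$. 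Its pullback is a nonzero section of $f^*\mathcal{O}(m(K_S-E)-A)$, so this line bundle has nonnegative degree, that is,
\[
	m\,\deg f^*(K_S-E)\geq \deg f^*A>0 ,
\]
the last inequality because $A$ is ample and $f$ is non constant. Hence $\deg f^*(K_S-E)>0$, contradicting the previous display, and therefore $f(C)\subset\mathbb{B^+}(K_S-E)$.

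I do not anticipate a real difficulty; the whole weight of the proof is carried by Theorem~\ref{effective}. The one point worth stressing is why the \emph{augmented} base locus is the correct object here: using the stable base locus $\mathbb{B}(K_S-E)$ instead would only produce a nonzero section of $f^*\mathcal{O}(m(K_S-E))$, hence $\deg f^*(K_S-E)\geq 0$, which combined with the bound gives merely $\deg f^*(K_S-E)=0$ and no contradiction. Twisting by a fixed ample $A$ is exactly what upgrades this to a strict inequality. Note that the argument also pins down the elliptic curves on $S$ not contained in $E$ as precisely those with $\deg f^*(K_S-E)=0$, consistently with Example~\ref{ex}; combined with Proposition~\ref{weakentire} it removes the term $\mathbf{Y}$ there and yields Theorem~\ref{entire curves}.
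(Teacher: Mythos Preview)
Your proposal is correct and follows essentially the same route as the paper: assume $f(C)\nsubseteq\mathbb{B}^+(K_S-E)$, extract a strictly positive lower bound $\deg f^*(K_S-E)>0$ from ampleness, and contradict the bound $\deg f^*(K_S-E)\leq 0$ coming from Proposition~\ref{degree}. The only cosmetic difference is that the paper invokes the characterization $\mathbb{B}^+(K_S-E)=\bigcap_{K_S-E=A+D}\Supp D$ to find an effective $D$ not containing $f(C)$, whereas you work directly with the definition $\bigcap_{m>0}\Bs(m(K_S-E)-A)$ to produce a non-vanishing section; the resulting inequalities are identical, and your explicit remark on why the \emph{augmented} (rather than stable) base locus is needed is a nice addition.
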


\begin{proof}
	Since $K_S-E$ is big, then it can be written as the sum of an ample divisor $A$ and an effective divisor $D$. Moreover, the augmented base locus can be given in terms of all these possible sums as 
	\[
		\mathbb{B^+}(K_S-E)=\bigcap_{\mathclap{K_S-E=A+D}}\Supp D
	\]
	(\cite{base}, Remark $1.3$). Now, if $f(C)\nsubseteq \mathbb{B^+}(K_S-E)$ then there is a $D$ such that $f(C)\nsubseteq D$, thus $\deg f^*D\geq  0$ and hence,
	\[
		\deg f^*(K_S-E)=\deg f^*A+\deg f^*D>0,
	\]
	but note that $f(C)\nsubseteq E$, thus from Proposition \ref{degree} we have that
	\[
		\deg f^*(K_S-E)\leq 0.
	\]
	A contradiction. Therefore $f(C)\subset \mathbb{B^+}(K_S-E)$.
	
\end{proof}

Finally, we finish the proof of Theorem \ref{entire curves}.

\begin{Cor}
	If $S$ is a product-quotient surface of general type such that $P_g=0$ and $c_1^2=6$, then for any non constant holomorphic map $f:\mathbb{C}\rightarrow S$,
	\[
		f(\mathbb{C})\subset E\cup \mathbb{B^+}(K_S-E)
	\]
	where $E$ is the exceptional divisor and
	\[
		\mathbb{B^+}(K_S-E):=\bigcap_{m>0}\Bs(m(K_S-E)-A)
	\] 
	with $A$ an ample line bundle, is the augmented base locus of $K_S-E$.
\end{Cor}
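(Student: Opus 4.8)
The plan is to deduce this corollary from Proposition~\ref{weakentire} together with Proposition~\ref{elliptic} and Lemma~\ref{cc}; the only remaining work is to absorb the ``central component'' term $\mathbf{Y}$ appearing in the conclusion of Proposition~\ref{weakentire} into $E\cup\mathbb{B^+}(K_S-E)$. First I would apply Proposition~\ref{weakentire} to get $f(\mathbb{C})\subset\mathbf{Y}\cup E\cup\mathbb{B^+}(K_S-E)$. Since $\mathbb{C}$ is connected and $f$ is non constant, the Zariski closure $\overline{f(\mathbb{C})}$ is an irreducible curve, hence it is contained in a single irreducible component of the (Zariski closed) right-hand side: either a component of $E$, or a component of $\mathbb{B^+}(K_S-E)$ --- in which cases we are done at once --- or a central component $Y$ of a singular fiber of $\sigma_1$ or $\sigma_2$.

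It then remains to treat the case $\overline{f(\mathbb{C})}\subseteq Y$. As $\overline{f(\mathbb{C})}$ is an irreducible curve inside the irreducible curve $Y$ and $f$ is non constant, in fact $\overline{f(\mathbb{C})}=Y$, so $f$ may be regarded as a non constant holomorphic map $\mathbb{C}\to Y$. Recall that $Y$ is a smooth projective curve (it is isomorphic to a quotient of $C_1$ or $C_2$ by a subgroup of $G$), and by Lemma~\ref{cc} it is not rational, so $g(Y)\geq 1$. If $g(Y)\geq 2$ then $Y$ is Brody hyperbolic --- its universal cover is the unit disc, so any holomorphic map $\mathbb{C}\to Y$ lifts to a bounded entire map and is therefore constant --- contradicting that $f$ is non constant. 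If $g(Y)=1$, I would apply Proposition~\ref{elliptic} to the inclusion $\iota\colon Y\hookrightarrow S$; this is legitimate since $Y$ is a smooth projective curve of genus one and $Y\nsubseteq E$ (a central component is never one of the rational curves constituting an H-J string), and it yields $Y=\iota(Y)\subset\mathbb{B^+}(K_S-E)$. Hence $f(\mathbb{C})\subseteq Y\subset\mathbb{B^+}(K_S-E)$ and the proof is complete.

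The only genuinely delicate point --- flagged already by Example~\ref{ex} --- is the possibility of elliptic central components, and this is exactly what Proposition~\ref{elliptic} disposes of; its proof in turn rests on the effective inequality $\deg f^*(K_S-E)\leq 2(2g(C)-2)$ of Proposition~\ref{degree}, which for $g(C)=1$ forces $\deg f^*(K_S-E)\leq 0$, combined with the description of $\mathbb{B^+}(K_S-E)$ as the intersection over all decompositions $K_S-E=A+D$ (with $A$ ample and $D$ effective) of the supports $\Supp D$. Beyond invoking these two propositions, I expect no real obstacle: the assembly reduces to the irreducibility argument above and the standard hyperbolicity of curves of genus $\geq 2$, both of which are routine.
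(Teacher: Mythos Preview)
Your argument is correct and follows essentially the same route as the paper: invoke Proposition~\ref{weakentire}, then use Lemma~\ref{cc} to rule out rational central components, Brody hyperbolicity to rule out those of genus at least two, and Proposition~\ref{elliptic} to absorb the elliptic ones into $\mathbb{B^+}(K_S-E)$. The only cosmetic difference is that you track the Zariski closure of $f(\mathbb{C})$ and argue component by component, whereas the paper phrases it as pruning the set $\mathbf{Y}$ directly; the content is identical.
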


\begin{proof}
	From Proposition \ref{weakentire} we have that $f(\mathbb{C})\subset \mathbf{Y} \cup E\cup \mathbb{B^+}(K_S-E)$ where $\mathbf{Y}$ is the union of all central components. First, note that we can remove all components with genus bigger or equal to 2 since they are hyperbolic, and by Lemma \ref{cc} we know that no component can be rational. Now, by Proposition \ref{elliptic}, the elliptic components must be contained in the augmented base locus of $K_S-E$. Therefore, $f(\mathbb{C})\subset E\cup \mathbb{B^+}(K_S-E)$.
\end{proof}

	\nocite{*}
	\bibliographystyle{alpha}
	\bibliography{bibliography}

\newcommand{\etalchar}[1]{$^{#1}$}
\begin{thebibliography}{BHPVdV03}

\bibitem[ACLG12]{Gasbarri}
Pascal Autissier, Antoine Chambert-Loir, and Carlo Gasbarri.
\newblock On the canonical degrees of curves in varieties of general type.
\newblock {\em Geometric and Functional Analysis}, 22(5):1051--1061, 2012.

\bibitem[BC04]{BC}
Ingrid Bauer and Fabrizio Catanese.
\newblock Some new surfaces with $p_g=q=0$.
\newblock {\em Proceedings of the Fano Conference}, 2004.

\bibitem[BCG08]{BCG}
Ingrid Bauer, Fabrizio Catanese, and Fritz Grunewald.
\newblock The classification of surfaces with $p_g= q= 0$ isogenous to a
  product of curves.
\newblock {\em Pure and Applied Mathematics Quarterly}, 4(2), 2008.

\bibitem[BCGP08]{BCGP}
Ingrid Bauer, Fabrizio Catanese, Fritz Grunewald, and Roberto Pignatelli.
\newblock Quotients of products of curves, new surfaces with $ p\_g= 0$ and
  their fundamental groups.
\newblock {\em arXiv preprint arXiv:0809.3420}, 2008.

\bibitem[Bea96]{Beauville}
Arnaud Beauville.
\newblock {\em Complex algebraic surfaces}.
\newblock Number~34. Cambridge University Press, 1996.

\bibitem[BHPVdV03]{BPHV}
W~Barth, K~Hulek, Chris Peters, and A~Van~de Ven.
\newblock {\em Compact Complex Surfaces}, volume~4.
\newblock Springer Science \& Business Media, 2003.

\bibitem[Bog77]{Bogomolov}
Fedor Bogomolov.
\newblock Families of curves on a surface of general type.
\newblock {\em Doklady Akademii Nauk SSSR}, 236(5):1041--1044, 1977.

\bibitem[BP10]{BP}
Ingrid Bauer and Roberto Pignatelli.
\newblock The classification of minimal product-quotient surfaces with $ p\_g=
  0$.
\newblock {\em arXiv preprint arXiv:1006.3209}, 2010.

\bibitem[Bru10]{Brunella}
M~Brunella.
\newblock {\em Birational geometry of foliations}.
\newblock IMPA, 2010.

\bibitem[Dem97]{D}
Jean-Pierre Demailly.
\newblock Algebraic criteria for kobayashi hyperbolic projective varieties and
  jet differentials.
\newblock 1997.

\bibitem[ELM{\etalchar{+}}06]{base}
Lawrence Ein, Robert Lazarsfeld, Mircea Musta{\c{t}}{\u{a}}, Michael Nakamaye,
  and Mihnea Popa.
\newblock Asymptotic invariants of base loci.
\newblock In {\em Annales de l'institut Fourier}, volume~56, pages 1701--1734,
  2006.

\bibitem[FK92]{Farkas}
Hershel~M Farkas and Irwin Kra.
\newblock Riemann surfaces.
\newblock In {\em Riemann surfaces}. Springer, 1992.

\bibitem[Fuj74]{Fujiki}
Akira Fujiki.
\newblock On resolutions of cyclic quotient singularities.
\newblock {\em Publications of the Research Institute for Mathematical
  Sciences}, 10(1):293--328, 1974.

\bibitem[GG80]{gr}
Mark Green and Phillip Griffiths.
\newblock Two applications of algebraic geometry to entire holomorphic
  mappings.
\newblock In {\em The Chern symposium 1979}, pages 41--74. Springer, 1980.

\bibitem[Laz04]{L}
Robert~K Lazarsfeld.
\newblock {\em Positivity in algebraic geometry I: Classical setting: line
  bundles and linear series}, volume~48.
\newblock Springer Science \& Business Media, 2004.

\bibitem[McQ98]{mcquillan}
Michael McQuillan.
\newblock Diophantine approximations and foliations.
\newblock {\em Publications Math{\'e}matiques de l'Institut des Hautes
  {\'E}tudes Scientifiques}, 87(1):121--174, 1998.

\bibitem[Miy08]{miyaoka}
Yoichi Miyaoka.
\newblock The orbibundle miyaoka--yau--sakai inequality and an effective
  bogomolov--mcquillan theorem.
\newblock {\em Publications of the Research Institute for Mathematical
  Sciences}, 44(2):403--417, 2008.

\bibitem[Pol10]{Polizzi}
Francesco Polizzi.
\newblock Numerical properties of isotrivial fibrations.
\newblock {\em Geometriae Dedicata}, 147(1):323--355, 2010.

\bibitem[Rei12]{Reid}
Miles Reid.
\newblock Surface cyclic quotient singularities and hirzebruch-jung
  resolutions.
\newblock {\em manuscript available at http://www. warwick. ac. uk/masda/surf},
  2012.

\bibitem[RR14]{RR}
Xavier Roulleau and Erwan Rousseau.
\newblock Canonical surfaces with big cotangent bundle.
\newblock {\em Duke Mathematical Journal}, 163(7):1337--1351, 2014.

\bibitem[RT15]{rt}
Erwan Rousseau and Fr{\'e}d{\'e}ric Touzet.
\newblock Curves in hilbert modular varieties.
\newblock {\em arXiv preprint arXiv:1501.03261}, 2015.

\bibitem[Ser96]{Serrano}
Fernando Serrano.
\newblock Isotrivial fibred surfaces.
\newblock {\em Annali di Matematica pura ed applicata}, 171(1):63--81, 1996.

\end{thebibliography}
	
\end{document}